\newcommand{\Z}{\mathbb{Z}}
\newcolumntype{R}{>{\raggedleft\arraybackslash}X}
\newcolumntype{L}{>{\raggedright\arraybackslash}X}
\newtheorem{theorem}{Theorem}
\title{ \bf Distance partitions of extremal and largest known circulant graphs of degree 2 to 9}
\author{R. R. Lewis\\[-3pt]
\small Department of Mathematics and Statistics\\[-3pt]
\small The Open University\\[-3pt]
\small Milton Keynes, UK\\[-3pt]
\small \texttt{robert.lewis@open.ac.uk}}
\date{5th August 2014}
\makeatletter\markright{R. R. Lewis}\makeatother 
\renewcommand\section{\@startsection {section}{1}{\z@}%
                                   {-2.5ex \@plus -1ex \@minus -.2ex}%
                                   {1.3ex \@plus.2ex}%
                                   {\normalfont\bf}}
\begin{document}
\maketitle

\begin{abstract}

This paper considers the degree-diameter problem for extremal and largest known undirected circulant graphs of degree 2 to 9 of arbitrary diameter. As these graphs are vertex transitive it is possible to define their distance partition. The number of vertices in each level of the distance partition is shown to be related to an established upper bound for the order of Abelian Cayley graphs. Furthermore these graphs are all found to have odd girth which is maximal for their diameter. Therefore the type of each vertex in a level may be well-defined by the number of adjacent vertices in the preceding level. With this definition the number of vertices of each type in each level is also shown to be related to the same Abelian Cayley graph upper bound. Finally some implications are discussed for circulant graphs of higher degree.

\end{abstract}


\section{Introduction}
The degree-diameter problem is to identify extremal graphs, having the largest possible number of vertices for a given maximum degree and diameter. For undirected circulant graphs of given degree and arbitrary diameter, extremal graphs have been identified only for degree 2 to 5. For degree 2 and 3, the solutions are straightforward. For degree 4 Chen and Jia included a proof in their 1993 paper \cite {Chen}, and for degree 5 Dougherty and Faber presented a proof in 2004 \cite {Dougherty}. For degree 6 to 9 families of largest known circulant graphs have been discovered which are conjectured to be extremal but have been proven so only for a limited range of diameters: degree 6 and 7 by Dougherty and Faber \cite {Dougherty} and degree 8 and 9 by the author \cite {Lewis}. For degree 10 and above a lower bound has been established for all even degrees by Chen and Jia \cite {Chen}.

Adopting the terminology of Macbeth, Siagiova and Siran \cite {Macbeth} we will use $d$ for degree, $k$ for diameter, and $CC(d,k)$ for the order of an extremal undirected circulant graph of degree $d$ and diameter $k$. We also use $CJ(d,k)$ for Chen and Jia's lower bound, $DF(d,k)$ for the order of Dougherty and Faber's graphs of degree 6 and 7, and $L(d,k)$ for the order of the author's graphs of degree 8 and 9. Formulae, depending on the diameter, for the order of the extremal and largest known circulant graphs of degree 2 to 9 are shown in table \ref{table:2A}.

\begin{table} [!htbp]
\small
\caption{\small The order and number of isomorphism classes of the extremal and largest known circulant graphs of degree 2 to 9 of arbitrary diameter $k$.} 
\centering 
\begin{tabular}{ @ { } c l l l c c c } 
\noalign {\vskip 2mm}  
\hline\hline 
\noalign {\vskip 1mm}  
Degree, & Status & Validity & Order &  \multicolumn {3} {r} { Isomorphism}\\
$d$ & & & & \multicolumn {3} {r} {classes} \\
\hline
\noalign {\vskip 1mm}  
2 & Extremal & $k\geq 1$ & \multicolumn {2} {l}{$CC(2,k)=2k+1$} & 1 & \\
3 & Extremal & $k\geq 1$ & \multicolumn {2} {l}{$CC(3,k)=4k$} & 1 & \\
\noalign {\vskip 1mm}  
4 & Extremal & $k\geq 1$ & \multicolumn {2} {l}{$CC(4,k)=2k^2+2k+1$} & 1 & \\
5 & Extremal & $k\geq 2$ & \multicolumn {2} {l}{$CC(5,k)=4k^2$} & 1 & \\
\noalign {\vskip 2mm}  
6 & Largest & $k\equiv0$ (mod 3), $k\geq 3$ & \multicolumn {2} {l}{$DF(6,k)=(32k^3+48k^2+54k+27)/27$} & 2 & \\
   & known & $k\equiv1$ (mod 3), $k\geq 1$ & \multicolumn {2} {l}{$DF(6,k)=(32k^3+48k^2+78k+31)/27$} & 1 & \\
   &            & $k\equiv2$ (mod 3), $k\geq 2$ & \multicolumn {2} {l}{$DF(6,k)=(32k^3+48k^2+54k+11)/27$} & 2* & \\
\noalign {\vskip 2mm}  
7 & Largest & $k\equiv0$ (mod 3), $k\geq 3$ & \multicolumn {2} {l}{$DF(7,k)=(64k^3+108k)/27$} & 1 & \\
   &  known & $k\equiv1$ (mod 3), $k\geq 4$ & \multicolumn {2} {l}{$DF(7,k)=(64k^3+60k-16)/27$} & 2 & \\
   &             & $k\equiv2$ (mod 3), $k\geq 5$ & \multicolumn {2} {l}{$DF(7,k)=(64k^3+60k+16)/27$} & 2 & \\
\noalign {\vskip 2mm}  
8 & Largest & $k\equiv0$ (mod 2), $k\geq 4$ & \multicolumn {2} {l}{$L(8,k)=(k^4+2k^3+6k^2+4k)/2$} & 1 & \\
   & known  & $k\equiv1$ (mod 2), $k\geq 3$ & \multicolumn {2} {l}{$L(8,k)=(k^4+2k^3+6k^2+6k+1)/2$} & 1 & \\
\noalign {\vskip 2mm}  
9 & Largest & $k\equiv0$ (mod 2), $k\geq 6$ & \multicolumn {2} {l}{$L(9,k)=k^4+3k^2+2k$} & 1 & \\
   & known  & $k\equiv1$ (mod 2), $k\geq 5$ & \multicolumn {2} {l}{$L(9,k)=k^4+3k^2$} & 2 & \\

\hline
\noalign {\vskip 1mm}
  & & & \multicolumn {4} {l} {* For $k=2$ there are three isomorphism classes}
\end{tabular}
\label{table:2A} 
\end{table}

An undirected circulant graph $X(\Z_n,C)$ of order $n$ may be defined as a Cayley graph whose vertices are the elements of the cyclic group $\Z_n$ where two vertices $i,j$ are connected by an arc $(i, j)$ if and only if $j-i$ is an element of $C$, an inverse-closed subset of $\Z_n \setminus 0$, called the connection set. In common with all Cayley graphs, circulant graphs are vertex transitive and regular, with the degree $d$ of each vertex equal to the size of $C$. If $n$ is odd then $\Z _n \setminus 0 $ has no elements of order $2$. Therefore $C$ has even size, say $d=2f$, and is comprised of $f$ complementary pairs of elements with one of each pair strictly between $0$ and $n/2$. The set of $f$ elements of $C$ between 0 and $n/2$ is defined to be the generator set $G$ for $X$. If $n$ is even then $\Z_n \setminus 0$ has just one element of order $2$, namely $n/2$. In this case $C$ is comprised of $f$ complementary pairs of elements, as for odd $n$, with or without the addition of the self-inverse element $n/2$. If $C$ has odd size, so that $d=2f+1$, then the value of the self-inverse element $n/2$ is fixed by $n$. Therefore for a circulant graph of given order and degree, its connection set $C$ is completely defined by specifying its generator set $G$. The size of the connection set is equal to the degree $d$ of the graph, and the size of the generator set $f$ is defined to be the dimension of the graph.
In summary, undirected circulant graphs of odd degree $d$ must have even order. They have dimension $f=(d-1)/2$. Graphs of even degree $d$ may have odd or even order. They have dimension $f=d/2$.


\section{Distance partitions of extremal and best circulant graphs}

As Cayley graphs are vertex transitive it is possible to define their distance partition. If $n_l$ denotes the number of vertices in the $l$th level of the partition with respect to any reference vertex, then the \it distance partition profile \rm of any vertex transitive graph is defined to be the $(k+1)$-vector $(1,n_1,n_2,\ldots,n_k)$ where $k$ is the diameter of the graph.
For extremal circulant graphs of one dimension we see easily that for degree 2 the distance partition profiles for increasing diameter $k \geq 1$ are $(1,2),(1,2,2), (1,2,2,2), (1,2,2,2,2),\ldots$ ; and for degree 3 are $(1,3), (1,3,4), (1,3,4,4), (1,3,4,4,4),\ldots$ . In both cases the size of each successive level is a constant (2 or 4 respectively). For extremal circulant graphs of two dimensions we find for degree 4 the sequence $(1,4), (1,4,8),$ $(1,4,8,12), (1,4,8,12,16),$ etc, so that each successive level is 4 more than the previous. For degree 5, each new level does not immediately take its final size. From Level 2 onwards the size of each new level is 2 below its maximum value, which it reaches when the next level is added. See Table \ref{table:4A}.

\begin{table} [h]
\footnotesize
\caption{\small Distance partition profiles for extremal circulant graphs of degree 5.} 
\centering 
\begin{tabular}{ @ { } c r *{7} {r} }
\noalign {\vskip 2mm}
\hline\hline 
Diameter & Order & \multicolumn {7} {l} {Distance partition level} \\ 
$k$ & $CC(5,k)$ & 0 & 1 & 2 & 3 & 4 & 5 & 6 \\
\hline 
1 & 6 & 1 & 5 \\
2 & 16 & 1 & 5 & 10 \\
3 & 36 & 1 & 5 & 12 & 18 \\
4 & 64 & 1 & 5 & 12 & 20 & 26 \\
5 & 100 & 1 & 5 & 12 & 20 & 28 & 34 \\
6 & 144 & 1 & 5 & 12 & 20 & 28 & 36 & 42 \\ 
\hline
\end{tabular}
\label{table:4A} 
\end{table}

From level 3 onwards the maximum size of each successive level is 8 more than the previous. So for both degree 3 and 4 the size of the levels increase at a constant rate.

For three dimensions the evolution of the distance partition profiles for increasing diameter $k$ becomes more complicated. The profiles for the largest known graphs of degree 6 for diameter $k \leq 15$ are shown in Table \ref{table:4B}. For diameters with two isomorphism classes both have the same distance partition profile.

\begin{table} [h]
\footnotesize
\caption{\small Distance partition profiles for largest known circulant graphs of degree 6.} 
\centering 
\begin{tabularx} {\linewidth} { @ { } c  @ {} r *{16} {R} }
\noalign {\vskip 1mm}  
\hline\hline 
Diameter & Order & \multicolumn {16} {l} {Distance partition level} \\ 
$k$ & $DF(6,k)$ & 0 & 1 & 2 & 3 & 4 & 5 & 6 & 7 & 8 & 9 & 10 & 11 & 12 & 13 & 14  & 15\\
\hline 
    1     & 7     & 1     & 6     &       &       &       &       &       &       &       &       &       &       &       &       &         \\
    2     & 21    & 1     & 6     & 14    &       &       &       &       &       &       &       &       &       &       &       &         \\
    3     & 55    & 1     & 6     & 18    & 30    &       &       &       &       &       &       &       &       &       &       &         \\
    4     & 117   & 1     & 6     & 18    & 38    & 54    &       &       &       &       &       &       &       &       &       &         \\
    5     & 203   & 1     & 6     & 18    & 38    & 62    & 78    &       &       &       &       &       &       &       &       &         \\
    6     & 333   & 1     & 6     & 18    & 38    & 66    & 94    & 110   &       &       &       &       &       &       &       &         \\
    7     & 515   & 1     & 6     & 18    & 38    & 66    & 102   & 134   & 150   &       &       &       &       &       &       &         \\
    8     & 737   & 1     & 6     & 18    & 38    & 66    & 102   & 142   & 174   & 190   &       &       &       &       &       &         \\
    9     & 1027  & 1     & 6     & 18    & 38    & 66    & 102   & 146   & 190   & 222   & 238   &       &       &       &       &         \\
    10    & 1393  & 1     & 6     & 18    & 38    & 66    & 102   & 146   & 198   & 246   & 278   & 294   &       &       &       &         \\
    11    & 1815  & 1     & 6     & 18    & 38    & 66    & 102   & 146   & 198   & 254   & 302   & 334   & 350   &       &       &         \\
    12    & 2329  & 1     & 6     & 18    & 38    & 66    & 102   & 146   & 198   & 258   & 318   & 366   & 398   & 414   &       &         \\
    13    & 2943  & 1     & 6     & 18    & 38    & 66    & 102   & 146   & 198   & 258   & 326   & 390   & 438   & 470   & 486   &         \\
    14    & 3629  & 1     & 6     & 18    & 38    & 66    & 102   & 146   & 198   & 258   & 326   & 398   & 462   & 510   & 542   & 558     \\
    15    & 4431  & 1     & 6     & 18    & 38    & 66    & 102   & 146   & 198   & 258   & 326   & 402   & 478   & 542   & 590   & 622   & 638 \\
\hline
\end{tabularx}
\label{table:4B} 
\end{table}

As for degree 5, the size of each new level is initially below its maximum value, but now the number of increments to reach its maximum is not fixed at 1 but increases with increasing diameter, so that the maximal zone, where the levels have reached their maximum value, covers about the first two thirds of the levels. Also the difference between the maximum size of successive levels is no longer constant but increases linearly. Degree 7 is similar, with distance partition profile independent of isomorphism class and maximal zone covering two thirds of the levels.

For the largest known circulant graphs of dimension 4 the evolution of the distance partition profiles follows a similar structure but with certain differences. For both degree 8 and 9 the maximal zone covers about the first half of the levels, and the difference between the maximum size of successive levels increases as a quadratic. For degree 9 the two isomorphism classes for odd diameter share the same profile.

The use of distance partitions in the analysis of extremal graphs can be taken a stage further by considering an extension of the intersection array. We define the \it total intersection array \rm of a vertex transitive graph to have the same format as a standard intersection array but where each element is the total number of adjacent vertices summed across all vertices within each level of the distance partition. So taking Godsil and Royle's example of the dodecahedron \cite{Godsil} which has distance partition profile $(1,3,6,6,3,1)$ and standard intersection array
\[ \left (
\begin{array} {l l l l l l}
-  & 1 & 1 & 1 & 2 & 3 \\
0 & 0 & 1 & 1 & 0 & 0 \\
3 & 2 & 1 & 1 & 1 & -
\end{array}
\right )
,\]
its total intersection array becomes 
\[ \left (
\begin{array} {l l l l l l}
-  & 3 & 6 & 6 & 6 & 3 \\
0 & 0 & 6 & 6 & 0 & 0 \\
3 & 6 & 6 & 6 & 3 & -
\end{array}
\right )
.\]

With this definition the sum of the elements in each column of the total intersection array is equal to the corresponding element of the distance partition profile multiplied by the degree. The advantage for our analysis of circulant graphs is that the total intersection array is defined for all vertex transitive graphs whereas standard intersection arrays are only defined for distance regular graphs. Total intersection arrays provide a useful view on the structure of the graphs. The first non-zero element in the middle row determines the odd girth of the graph. They can also distinguish non-isomorphic graphs of common degree, diameter and order that might have the same distance partition profile. An example is given by the four isomorphism classses of extremal circulant graphs of degree 9 and diameter 3. These are easily proven to be distinct by determining their total intersection arrays, which are all different; see Table \ref{table:4F}.

\begin{table} [!htbp]
\small
\caption{\small Extremal graphs of each of the four isomorphism classes for $d=9$ and $k=3$.} 
\centering 
\begin{tabular}{ l l l l l}
\noalign {\vskip 1mm} 
\hline \hline
\noalign {\vskip 1mm} 
Diameter, & Order, & Generator set & Distance & Total \\
$k$ & $n$ &(excluding $n/2$) & partition & intersection \\
& & & profile & array \\
\hline
\noalign {\vskip 2mm} 
3 & 130 & $\left \{ 1,8,14,47 \right \}$ & $ (1,9,40,80)$ & 
 \multirow {3} {*} {$\left ( \begin{array} {l l l l}
-  & 9 & 72 & 244 \\
0 & 0 & 44 & 476  \\
9 & 72 &  244 & -
\end{array}
\right )$} \\
& & & & \\
& & & & \\
& & & & \\
3 & 130 & $\left \{ 1,8,20,35 \right \}$ & $(1,9,40,80)$ & 
 \multirow {3} {*} {$\left ( \begin{array} {l l l l}
-  & 9 & 72 & 242 \\
0 & 0 & 46 & 478  \\
9 & 72 &  242 & -
\end{array}
\right )$} \\
& & & & \\
& & & & \\
& & & & \\
3 & 130 & $\left \{ 1,26,49, 61 \right \}$ & $(1,9,40,80)$ & 
 \multirow {3} {*} {$\left ( \begin{array} {l l l l}
-  & 9 & 72 & 286 \\
0 & 0 & 2 & 434  \\
9 & 72 &  286 & -
\end{array}
\right )$} \\
& & & & \\
& & & & \\
& & & & \\
3 & 130 & $\left \{ 2,8,13,32 \right \}$ & $(1,9,40,80)$ & 
 \multirow {3} {*} {$\left ( \begin{array} {l l l l}
-  & 9 & 72 & 234 \\
0 & 0 & 54 & 486  \\
9 & 72 &  234 & -
\end{array}
\right )$} \\
& & & & \\
& & & & \\
\noalign {\vskip 1mm}
\hline
\end{tabular}
\label{table:4F} 
\end{table}

Reverting to the discussion on distance partition profiles, some obvious questions arise. What is the structure behind the maximum size of each level? What is the logic behind the evolution of the size of each level until it reaches its maximum? What determines the number of levels in the maximal zone?

\newpage
\section{Distance partitions and maximal levels}

First we consider the maximum size of each level.
A useful upper bound for the order of any Abelian Cayley graph is given by Doughterty and Faber \cite{Dougherty}. For a graph of degree $d$ and diameter $k$ the upper bound $M_{AC}(d,k)$  is defined as follows: 
\[M_{AC}(d,k) =
\begin{cases}
\ S(f,k) &\mbox{ for even } d, \mbox{ where } f=d/2\\
\ S(f,k)+S(f,k-1) &\mbox { for odd } d, \mbox{ where } f=(d-1)/2
\end{cases}
\]
where $f$ is the dimension of the graph and $S(f,k)= \sum _{i=0}^f 2^i {f\choose i} {k\choose i}$.

If we consider the implied upper bound for the size of level $l$ determined by the first order difference of the upper bound $M_{AC}(d,l)$ for increasing $l$, so that $LM_{AC}(d,l)=M_{AC}(d,l)-M_{AC}(d,l-1)$ for degree $d \geq 2$ and level $l \geq 2$, then the result is shown in Table \ref{table:4D}.

\begin{table} [h]
\small
\caption{\small $LM_{AC}(d,l)$, first order difference of the upper bound, $M_{AC}(d,l)$.} 
\centering 
\begin{tabularx} {\linewidth} { @ { } c @ { } c  @ { } @ { } @ { } r *{8} {R} }
\noalign {\vskip 1mm} 
\hline\hline 
\noalign {\vskip 1mm} 
Dimension & Degree & \multicolumn {9} {l} {Distance, $l$, from the reference vertex at level 0} \\ 
$f$ & $d$ & 2 & 3 & 4 & 5 & 6 & 7 & 8 & 9 & 10 \\
\hline 
\noalign {\vskip 1mm} 
1 & 2 & 2 & 2 & 2 & 2 & 2 & 2 & 2 & 2 & 2 \\
 & 3 & 4 & 4 & 4 & 4 & 4 & 4 & 4 & 4 & 4 \\
2 & 4 & 8 & 12 & 16 & 20 & 24 & 28 & 32 & 36 & 40 \\
& 5 & 12 & 20 & 28 & 36 & 44 & 52 & 60 & 68 & 76 \\
3 & 6 & 18 & 38 & 66 & 102 & 146 & 198 & 258 & 326 & 402 \\
& 7 & 24 & 56 & 104 & 168 & 248 & 344 & 456 & 584 & 728 \\
4 & 8 & 32 & 88 & 192 & 360 & 608 & 952 & 1408 & 1992 & 2720 \\
& 9 & 40 & 120 & 280 & 552 & 968 & 1560 & 2360 & 3400 & 4712 \\
\noalign {\vskip 2mm}
\hline
\end{tabularx}
\label{table:4D} 
\end{table}

Now these values are precisely the maximum size of each corresponding level of the distance partitions. So over the range of degrees and diameters considered, we can see that for each extremal and largest known circulant graph each distance partition level $l$ is filled to the maximum determined by the upper bound $M_{AC}(d,l)$. This suggests that not only the extremal graphs but also the graphs of degree 6 to 9 and the upper bound $M_{AC}(d,l)$ are optimal in some sense. Formulae for $LM_{AC}(d,l)$ as a function of $l$ are presented for degree 2 to 9 in Table \ref{table:3E}. For the graphs of dimension 1 and 2 all the levels are maximal, with the exception of the last level for degree 5. For dimension 3 we see that the proportion of maximal levels is about $2/3$ and for dimension 4 about $1/2$. For $f \ge 2$ we note that these proportions are represented by the expression $2/f$. The exact number of maximal levels in each case is also shown in Table \ref{table:3E}. These values are proved for degree 2, 4, 6 and 8 in the following six theorems. The proofs for odd degrees are similar.

\begin{table} [!htbp]
\small
\caption{\small Maximal distance partition levels of extremal and largest known circulant graphs of degree 2 to 9 and diameter $k$: size of each maximal level, $LM_{AC}(d,l)$, and the position of the last maximal level.} 
\centering 
\begin{tabular}{ @ { } c c c l l } 
\noalign {\vskip 2mm}  
\hline\hline 
\noalign {\vskip 1mm} 
Dimension, $f$ & Degree, $d$ & $LM_{AC}(d,1)$ & $LM_{AC}(d,l), l\geq 2$ & Last maximal level \\
\hline
\noalign {\vskip 1mm} 
1 & 2 & 2 & 2 & $k$ \\
& 3 & 3 & 4 & $k$\\
\noalign {\vskip 1mm} 
2 & 4 & 4 &  $4l$ & $k$ \\
& 5 & 5 & $8l-4$ & $k-1$ \\
\noalign {\vskip 1mm} 
3 & 6 & 6 & $4l^2+2$ & $\lfloor (2k+1)/3\rfloor$ \\
& 7 & 7 & $8l^2-8l+8$ & $\lfloor 2k/3\rfloor$ \\
\noalign {\vskip 1mm} 
4 & 8 & 8 & $(8l^3+16l)/3$ & $\lfloor (k+1)/2\rfloor$ \\
& 9 & 9 & $(16l^3-24l^2+56l-24)/3$ & $\lfloor k/2\rfloor$ \\
\hline
\end{tabular}
\label{table:3E} 
\end{table}

\begin{theorem}
For extremal degree $2$ circulant graphs of arbitrary diameter $k$, every distance partition level $l$ is maximal in the sense determined by the upper bound $M_{AC}(2,l)$.
\label{theorem:B0}
\end{theorem}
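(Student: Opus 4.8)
The plan is to make everything explicit, since for dimension~$1$ the distance partition can be read off directly. I would first pin down the graph, then evaluate the bound, then compute the distance partition and compare.

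A connected circulant graph of degree~$2$ on $n$ vertices is $X(\Z_n,\{1,-1\})$, the $n$-cycle, whose diameter is $\lfloor n/2\rfloor$; hence for a prescribed diameter $k$ the maximum order is $n=2k+1$, so $CC(2,k)=2k+1$ is realised by the $(2k+1)$-cycle, which by Table~\ref{table:2A} is the unique extremal graph. Next I would evaluate the bound: for $d=2$ we have $f=1$, so $M_{AC}(2,l)=S(1,l)=\sum_{i=0}^{1}2^i\binom{1}{i}\binom{l}{i}=1+2l$; thus $M_{AC}(2,0)=1$ and the implied level bound is $LM_{AC}(2,l)=M_{AC}(2,l)-M_{AC}(2,l-1)=2$ for every $l\ge 1$, in agreement with the $LM_{AC}(2,1)=2$ and $LM_{AC}(2,l)=2$ entries of Table~\ref{table:3E}.

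Then I would compute the distance partition of the $(2k+1)$-cycle $X(\Z_{2k+1},\{1,-1\})$. By vertex transitivity take $0$ as the reference vertex; the distance from $0$ to a vertex $j\in\Z_{2k+1}$ is $\min(j,\,2k+1-j)$, so for $1\le l\le k$ the set of vertices at distance exactly $l$ is $\{\,l,\ 2k+1-l\,\}$. Because $2k+1$ is odd we have $l\ne 2k+1-l$, so $n_l=2=LM_{AC}(2,l)$ for $1\le l\le k$, while $n_0=1=M_{AC}(2,0)$. Summing these increments gives $\sum_{l=0}^{k}n_l=2k+1=M_{AC}(2,k)$, so every level is filled to the upper bound and the last maximal level is $l=k$, as recorded in Table~\ref{table:3E}.

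There is no real obstacle here; the only step that needs a word of justification is the distance formula in the $(2k+1)$-cycle, namely that no walk from $0$ reaches a vertex $l\le k$ in fewer than $l$ steps, which holds since a walk of length $m$ around the cycle can only reach vertices $j$ with $\min(j,\,2k+1-j)\le m$. The corresponding degree-$3$ assertion in Table~\ref{table:3E} is handled in the same elementary way, completing the dimension-$1$ cases.
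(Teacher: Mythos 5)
Your proposal is correct and takes essentially the same route as the paper: the paper's proof simply observes that the extremal degree-$2$ graph of diameter $k$ is the $(2k+1)$-cycle and that each level of its distance partition contains exactly $2$ vertices, hence is maximal. You merely make explicit the computation $M_{AC}(2,l)=S(1,l)=2l+1$, so $LM_{AC}(2,l)=2$, and the explicit distance partition of the cycle, which the paper treats as immediate.
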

\begin{proof}
For an extremal circulant graph of degree 2 and arbitrary diameter $k$, the order is $CC(2,k)=2k+1$ and a generator set is $\{g_1\}$ where $g_1=1$. The graph is a cycle graph, and it is immediately evident that each level contains exactly 2 vertices and is therefore maximal.
\end{proof}

\begin{theorem}
For extremal degree $4$ circulant graphs of arbitrary diameter $k$, every distance partition level $l$ is maximal in the sense determined by the upper bound $M_{AC}(4,l)$.
\label{theorem:B1}
\end{theorem}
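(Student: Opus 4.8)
The plan is to prove the statement by a squeeze argument between a level-by-level upper bound coming from the Abelian Cayley reasoning and the exact order of the extremal graph.

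First I would record the relevant values. Since $d=4$ is even with dimension $f=2$, we have $M_{AC}(4,l)=S(2,l)=\sum_{i=0}^{2}2^{i}\binom{2}{i}\binom{l}{i}=1+4l+2l(l-1)=2l^{2}+2l+1$, so that $LM_{AC}(4,l)=M_{AC}(4,l)-M_{AC}(4,l-1)=4l$ for $l\ge 1$. Two consequences matter. First, $M_{AC}(4,k)=2k^{2}+2k+1=CC(4,k)$, so the whole profile bound telescopes: $1+\sum_{l=1}^{k}LM_{AC}(4,l)=M_{AC}(4,k)=CC(4,k)$. Second, since $CC(4,k)=2k^{2}+2k+1$ is odd, an extremal degree-$4$ circulant graph $X$ has no self-inverse element in its connection set; hence it has dimension exactly $2$ with a two-element generator set $\{g_{1},g_{2}\}$, and the argument underlying the bound $M_{AC}(4,\cdot)$ applies to it verbatim.

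Next I would assemble the two ingredients of the squeeze. \emph{(i) Level-wise upper bound.} The map $\phi\colon\mathbb{Z}^{2}\to\mathbb{Z}_{n}$ sending the two standard basis vectors to $g_{1},g_{2}$ is a graph homomorphism from the square grid (with its $\ell_{1}$ metric), hence distance-non-increasing. If a vertex $v$ of $X$ lies at distance exactly $l$ from the reference vertex, a geodesic for it lifts to a length-$l$ walk in $\mathbb{Z}^{2}$ ending at a point $x$ with $\phi(x)=v$; one must have $\|x\|_{1}=l$, since $\|x\|_{1}<l$ would lift back to a shorter representative of $v$. Thus every distance-$l$ vertex is a $\phi$-image of a point on the $\ell_{1}$-sphere of radius $l$ in $\mathbb{Z}^{2}$, which has $4l$ points, so $n_{l}\le 4l=LM_{AC}(4,l)$; this is precisely the reasoning behind $M_{AC}$, carried out level by level. \emph{(ii) Exact order and diameter.} By the known extremality result for degree $4$ (Chen and Jia \cite{Chen}), $X$ has order $CC(4,k)=2k^{2}+2k+1$, and its diameter is exactly $k$ (a graph of degree at most $4$ and diameter at most $k-1$ has order at most $CC(4,k-1)<CC(4,k)$), so $\sum_{l=0}^{k}n_{l}=CC(4,k)$ and $n_{l}=0$ for $l>k$.

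Combining (i) and (ii), $CC(4,k)=\sum_{l=0}^{k}n_{l}\le\sum_{l=0}^{k}LM_{AC}(4,l)=CC(4,k)$, so every inequality is an equality and $n_{l}=LM_{AC}(4,l)$ for each $l$, which is the claim. There is no deep obstacle: the telescoping identity is immediate and (ii) is the cited theorem. The one point requiring care is ingredient (i) — upgrading the ``ball'' statement $|B_{l}|\le M_{AC}(4,l)$ to the ``sphere'' statement $n_{l}\le LM_{AC}(4,l)$ — which the geodesic-lifting observation supplies. As an alternative that yields (i) and (ii) at once, one could instead work with the explicit model $X=X(\mathbb{Z}_{n},\{\pm 1,\pm(2k+1)\})$ and check directly that $(a,b)\mapsto a+b(2k+1)\bmod n$ is a bijection from the $\ell_{1}$-ball of radius $k$ in $\mathbb{Z}^{2}$ onto $\mathbb{Z}_{n}$ (a Lee-sphere tiling, forced once one notes both sides have $2k^{2}+2k+1$ elements and the map is injective there, which is a short congruence estimate); uniqueness of the representative then forces $d(0,\cdot)$ to equal the $\ell_{1}$-norm of that representative, and the profile $(1,4,8,\ldots,4k)$ falls out immediately.
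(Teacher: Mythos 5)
Your proof is correct, but it takes a genuinely different route from the paper. The paper fixes the explicit generator set $\{1,2k+1\}$ of the (unique) extremal graph and proves directly, by a congruence-and-size estimate, that the representation map $(a_1,a_2)\mapsto a_1g_1+a_2g_2 \bmod n$ is injective on the $\ell_1$-sphere of radius $l\le k$, so each level attains the sphere count $4l$. You instead avoid the explicit generators entirely: you use only that the graph has dimension $2$ (forced by the odd order), the easy surjection from the $\ell_1$-sphere of radius $l$ onto level $l$ to get $n_l\le LM_{AC}(4,l)=4l$, and the coincidence $CC(4,k)=M_{AC}(4,k)$ together with diameter exactly $k$ to squeeze $\sum_l n_l$ against $\sum_l LM_{AC}(4,l)$, forcing equality in every level. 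Your argument is shorter, needs no case analysis, and applies to any degree-$4$ circulant graph attaining the order $2k^2+2k+1$, whereas the paper's computation is tied to the stated generator set. What the paper's method buys in exchange is uniformity across the rest of the article: for degrees $5$--$9$ the extremal or largest known order is strictly below $M_{AC}(d,k)$, so a global squeeze cannot certify that individual levels are maximal, and the explicit injectivity computation is the technique that carries over to Theorems \ref{theorem:B2} and \ref{theorem:B3} (your closing remark about the Lee-sphere tiling is essentially the paper's argument in disguise). One small point to make explicit if you write this up: the reduction from ``net displacement of a geodesic'' to ``lattice point of norm exactly $l$'' should note that a representation of smaller $\ell_1$-norm would yield a shorter walk, which you do state; with that, the level-wise bound and the telescoping identity are complete.
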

\begin{proof}
For an extremal circulant graph of degree 4 and arbitrary diameter $k$, the order is $CC(4,k)=n=2k^2+2k+1$ and a generator set is $\{g_1,g_2\}$ where $g_1=1$ and $g_2=2k+1$. Let $v_a$ and $v_b$ be vertices in level $l \leq k$, relative to an arbitrary root vertex. As cyclic groups are Abelian the generating elements for each vertex can be taken in any order. Therefore $v_a \equiv a_1g_1+a_2g_2$ (mod $n$) and $v_b \equiv b_1g_1+b_2g_2$ (mod $n$) where $\vert a_1\vert+\vert a_2\vert=\vert b_1\vert+\vert b_2\vert=l$ for some $a_1, a_2, b_1, b_2 \in \Z$. The level is maximal if and only if $a_1=b_1$ and $a_2=b_2$ whenever $v_a=v_b$.

Suppose that $v_a=v_b$. Then $(a_1-b_1)g_1+(a_2-b_2)g_2\equiv 0$ (mod $n$). Now $\vert (a_1-b_1)g_1+(a_2-b_2)g_2\vert \leq 2kg_2=4k^2+2k<2n$. Thus $(a_1-b_1)g_1+(a_2-b_2)g_2=0$ or $\pm n$. If  $(a_1-b_1)g_1+(a_2-b_2)g_2=0$ then $a_1-b_1\equiv0$ (mod $2k+1$). But $\vert a_1\vert$, $\vert b_1\vert \leq k$. Hence $a_1=b_1$ and so $a_2=b_2$. Therefore in this case level $l$ is maximal. If  $(a_1-b_1)g_1+(a_2-b_2)g_2=\pm n$, then without loss of generality assume  $(a_1-b_1)g_1+(a_2-b_2)g_2=n$. Noting that $n\equiv k+1$ (mod $2k+1$) we have $a_1-b_1\equiv k+1$ (mod $2k+1$), so that $a_1-b_1=-k$ or $k+1$. If $a_1-b_1=-k$ then $(a_2-b_2)g_2=n+k=2k^2+3k+1$ so that $a_2-b_2=k+1$. If $a_1-b_1=k+1$ then $(a_2-b_2)g_2=n-k-1=2k^2+k$ so that $a_2-b_2=k$. In either case $\vert a_1-b_1\vert+\vert a_2-b_2\vert=2k+1$ contrary to the premise. Hence we must have $a_1=b_1$ and $a_2=b_2$, and the level is maximal.
\end{proof}

\begin{theorem}
For largest known degree $6$ circulant graphs of arbitrary diameter $k$, every distance partition level $l$ is maximal for $l\leq\lfloor (2k+1)/3\rfloor$ in the sense determined by the upper bound $M_{AC}(6,l)$.
\label{theorem:B2}
\end{theorem}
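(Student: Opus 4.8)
The plan is to follow the template of the proof of Theorem~\ref{theorem:B1}, the difference being that we now work with three generators rather than two. First I would record the Dougherty--Faber generator set $G=\{g_1,g_2,g_3\}$, with $g_1=1$, together with $n=DF(6,k)$, treating the three residue classes $k\equiv 0,1,2 \pmod{3}$ in parallel; here $g_2$ is linear and $g_3$ quadratic in $k$. For a level $l\le L:=\lfloor(2k+1)/3\rfloor$, maximality means that the $LM_{AC}(6,l)$ lattice points $(a_1,a_2,a_3)\in\Z^3$ with $|a_1|+|a_2|+|a_3|=l$ map to distinct vertices under $(a_1,a_2,a_3)\mapsto a_1g_1+a_2g_2+a_3g_3\pmod n$, and that each image genuinely lies at distance $l$ and not less; both assertions follow simultaneously once we know that the $\ell_1$-ball of radius $L$ in $\Z^3$ injects into $\Z_n$. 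Hence it suffices to prove the single claim: if $c_1g_1+c_2g_2+c_3g_3\equiv 0\pmod n$ with $|c_1|+|c_2|+|c_3|\le 2L$, then $c_1=c_2=c_3=0$ --- applied with $c_i=a_i-b_i$ to two length-$l$ representations of a common vertex $v_a=v_b$, exactly as in Theorem~\ref{theorem:B1}.

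To establish the claim I would bound $|c_1g_1+c_2g_2+c_3g_3|\le 2L\,g_3$, verify that this is less than a small fixed multiple of $n$ (the size estimates suggest $2L\,g_3<3n$), and so reduce to $c_1g_1+c_2g_2+c_3g_3=jn$ with $j\in\{0,\pm1,\pm2\}$. For each $j$ I would reduce the resulting integer identity $c_1+c_2g_2+c_3g_3=jn$ first modulo $g_2$ --- using the easily computed residues of $g_3$ and of $n$ modulo $g_2$ to confine $c_1$, and hence, via the size bound, to fix it --- then substitute and reduce modulo $g_3$ to fix $c_2$, and finally recover $c_3$. At each step the point is that $|c_i|\le 2L$ is strictly smaller than the modulus ($2L<g_2$ and $2L<g_3$, up to lower-order terms), so a would-be nonzero multiple of the modulus is impossible. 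The cases $j\ne0$ are discarded as in Theorem~\ref{theorem:B1}: each candidate solution is found to satisfy $|c_1|+|c_2|+|c_3|>2L$, contradicting the hypothesis and leaving only $j=0$ with $c_1=c_2=c_3=0$.

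The main obstacle I anticipate is the bookkeeping in this two-stage elimination, carried out uniformly (or else case by case) over the three residue classes of $k$ modulo $3$: compared with the degree-4 proof there is an extra coefficient to eliminate, the bound on $|c_1g_1+c_2g_2+c_3g_3|$ admits more multiples of $n$, and the inequalities $2L<g_2$ and $2L<g_3$ must be checked with the exact constants, since it is the failure of the corresponding inequality one step further out that makes $\lfloor(2k+1)/3\rfloor$ --- rather than anything larger --- the correct threshold. If the kernel lattice $\Lambda=\ker(\Z^3\to\Z_n)$ turns out to have a transparent integer basis all of whose vectors have $\ell_1$-norm exceeding $2L$, a cleaner route would be to check directly that $\Lambda$ has no nonzero vector of $\ell_1$-norm at most $2L$, replacing the case analysis by a short computation with a fixed $3\times3$ matrix; I would attempt that first.
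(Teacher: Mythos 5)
Your proposal is correct and takes essentially the same route as the paper's proof: express vertices as integer combinations of the generators, reduce a coincidence $v_a=v_b$ to $c_1g_1+c_2g_2+c_3g_3=jn$ for a small integer $j$ via the $\ell_1$-bound, and eliminate the coefficients successively using size and congruence estimates, exactly as in the degree-4 template. The only tactical difference is how the nonzero multiples of $n$ are discarded (the paper notes that $n$ requires at least $2k+1$ generator steps, while you reduce modulo $g_2$ and $g_3$, and your slightly looser bound admits $j=\pm2$, which is trivially excluded by size), so the two arguments are essentially identical.
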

\begin{proof}
We give the proof just for the case $k\equiv 0$ (mod 3), $k\geq 3$, for a graph of isomorphism class 1. The proofs for isomorphism class 2 and for cases $k\equiv 1$ and $k\equiv 2$ (mod 3) are similar. So let $k=3m$ for $m\geq 1$.
For a best circulant graph of degree 6 and arbitrary diameter $3m$, the order is $DF(6,k)=n=32m^3+16m^2+6m+1$ and a generator set for isomorphism class 1 is $\{g_1,g_2,g_3\}$ where $g_1=1$, $g_2=4m+1$ and $g_3=16m^2+4m+1$. Let $v_a$ and $v_b$ be vertices in level $l \leq 2m$, relative to an arbitrary root vertex.
Then $v_a \equiv a_1g_1+a_2g_2+a_3g_3$ (mod $n$) and $v_b \equiv b_1g_1+b_2g_2+b_3g_3$ (mod $n$) where $\vert a_1\vert+\vert a_2\vert+\vert a_3\vert=\vert b_1\vert+\vert b_2\vert+\vert a_3\vert=l\leq 2m$ for some $a_1, a_2, a_3, b_1, b_2, b_3 \in \Z$. Also let $w_a = a_1g_1+a_2g_2+a_3g_3$ and $w_b = b_1g_1+b_2g_2+b_3g_3$. The level is maximal if and only if $a_1=b_1$, $a_2=b_2$ and $a_3=b_3$ whenever $v_a=v_b$.

Suppose that $v_a=v_b$. Then $(a_1-b_1)g_1+(a_2-b_2)g_2+(a_3-b_3)g_3\equiv 0$ (mod $n$).
Now $lg_3\leq2mg_3=32m^3+8m^2+2m<n$. So $\vert w_a-w_b\vert<2n$, and $w_a-w_b=0$ or $\pm n$.

First we consider the case $w_a-w_b=\pm n$. Without loss of generality we may assume $w_a-w_b=n$, so that $(a_1-b_1)g_1+(a_2-b_2)g_2+(a_3-b_3)g_2=n$. Now $n=32m^3+16m^2+6m+1=2mg_3+2mg_2+(2m+1)g_1$. This is equivalent to $6m+1$ edges and there is no construction with fewer edges. However $w_a$ and $w_b$ are each the sum of at most $2m$ edges, or $4m$ in total. Hence there exists no pair $w_a$ and $w_b$ such that $w_a-w_b=n$. Therefore this case admits no solution.

Now we consider the case $w_a-w_b=0$. Here $(a_1-b_1)g_1+(a_2-b_2)g_2+(a_3-b_3)g_2=0$. Suppose for a contradiction that $a_3\neq b_3$. Then $\vert a_3-b_3\vert\geq1$, so that $\vert (a_1-b_1)g_1+(a_2-b_2)g_2\vert\geq g_3=16m^2+4m+1$. But $\vert (a_1-b_1)+(a_2-b_2)\vert\leq\vert a_1-b_1\vert+\vert a_2-b_2\vert\leq4m$, so that $\vert (a_1-b_1)g_1+(a_2-b_2)g_2\vert\leq 4mg_2=16m^2+4m$. This contradiction proves that $a_3=b_3$. Now suppose that $a_2\neq b_2$. Then $\vert a_2-b_2\vert\geq1$, so that $\vert a_1-b_1\vert g_1\geq g_2=4m+1$. But  $\vert a_1-b_1\vert g_1\leq 4m$. Hence $a_2=b_2$ and thus also $a_1=b_1$.
This proves that if $v_a=v_b$ then we must have $a_1=b_1$, $a_2=b_2$ and $a_3=b_3$. Hence the level is maximal.
\end{proof}

\begin{theorem}
For largest known degree $6$ circulant graphs of arbitrary diameter $k$, every distance partition level $l$ is submaximal for $l>\lfloor (2k+1)/3\rfloor$ in the sense determined by the upper bound $M_{AC}(6,l)$.
\label{theorem:C1}
\end{theorem}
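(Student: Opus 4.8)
The plan is to adopt the reduction used in the proof of Theorem~\ref{theorem:B2}: it suffices to treat the case $k=3m$ with $m\ge 1$ for a graph of isomorphism class~1, the remaining residues $k\equiv 1,2\pmod 3$ and isomorphism class~2 being handled identically with the relevant generator sets. For this case $n=32m^3+16m^2+6m+1$, a generator set is $\{g_1,g_2,g_3\}=\{1,\,4m+1,\,16m^2+4m+1\}$, and $\lfloor(2k+1)/3\rfloor=2m$, so the levels to be shown submaximal are precisely $l=2m+1,2m+2,\dots,3m$.

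First I would record the combinatorial content of the bound. The number $LM_{AC}(6,l)=4l^2+2$ is exactly the number of vectors $(a_1,a_2,a_3)\in\Z^3$ with $|a_1|+|a_2|+|a_3|=l$; let $\phi_l$ denote the map sending such a vector to $a_1g_1+a_2g_2+a_3g_3\bmod n$. Since the group is Abelian, a geodesic from the root to a vertex at distance $l$ never uses a generator together with its inverse, and so it reduces to a vector of this kind; hence level $l$ is contained in the image of $\phi_l$, giving $n_l\le|\operatorname{im}\phi_l|\le 4l^2+2$, the second inequality being strict as soon as $\phi_l$ is not injective. So to prove level $l$ submaximal it is enough to display two distinct vectors of $\ell_1$-norm $l$ with the same image under $\phi_l$.

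The main step is to produce such a collision for every $l$ with $2m+1\le l\le 3m$. Put $c=l-2m-1$, so that $0\le c\le m-1$, and take the vectors $(2m+1,0,c)$ and $(-2m,1,c)$. They are distinct, each has $\ell_1$-norm $(2m+1)+c=l$, and --- using the identity $g_2=(4m+1)g_1$, which holds already over $\Z$ --- both are sent by $\phi_l$ to the vertex $(2m+1)+c\,g_3$. Hence $\phi_l$ is not injective, so $n_l<4l^2+2=LM_{AC}(6,l)$ and level $l$ is submaximal. Carrying this out for each $l$ from $2m+1$ to $3m$ covers every level beyond $\lfloor(2k+1)/3\rfloor$, and the analogous constructions for the other cases complete the proof.

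I expect the only real obstacle to be conceptual rather than computational: unlike Theorem~\ref{theorem:B2} there are no delicate size estimates here, so the work lies in the reformulation that level $l$ is submaximal whenever $\phi_l$ fails to be injective (which rests on the reduced-geodesic observation), and in recognising that the relevant relation is $g_2=(4m+1)g_1$ written in its most balanced form $(2m+1)g_1=g_2-2m\,g_1$, costing $2m+1$ edges on each side. This is exactly why the first collision occurs at level $2m+1$, one beyond the last maximal level of Theorem~\ref{theorem:B2}, rather than sooner. One minor point to verify is that padding both representations with $c\le m-1$ additional $+g_3$ steps keeps their $\ell_1$-norm equal to $l$ for every admissible $l\le 3m=k$, which is immediate.
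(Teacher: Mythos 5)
Your proposal is correct and takes essentially the same route as the paper: for the representative case $k=3m$, class 1, it exhibits two distinct weight-$(2m+1)$ representations of the same vertex (you use $(2m+1)g_1=g_2-2mg_1$, the paper uses $(2m+1)g_2=-g_1-(2m-1)g_2+g_3$), which forces level $2m+1$ below the count $LM_{AC}(6,l)$ of lattice vectors of $\ell_1$-norm $l$. The only difference is that you cover the later levels $2m+1<l\le 3m$ explicitly by padding the collision with $+g_3$ steps, whereas the paper simply asserts that submaximality of a level propagates to all subsequent levels; your version makes that step concrete.
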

\begin{proof}
We give the proof just for the case $k\equiv 0$ (mod 3), $k\geq 3$, for a graph of isomorphism class 1. The proofs for isomorphism class 2 and for cases $k\equiv 1$ and $k\equiv 2$ (mod 3) are similar. So let $k=3m$ for $m\geq 1$.
For a best circulant graph of degree 6 and arbitrary diameter $3m$, the order is $DF(6,k)=n=32m^3+16m^2+6m+1$ and a generator set for isomorphism class 1 is $\{g_1,g_2,g_3\}$ where $g_1=1$, $g_2=4m+1$ and $g_3=16m^2+4m+1$. Let $v_a$ and $v_b$ be vertices in level $2m+1$, relative to an arbitrary root vertex.
Then $v_a \equiv a_1g_1+a_2g_2+a_3g_3$ (mod $n$) and $v_b \equiv b_1g_1+b_2g_2+b_3g_3$ (mod $n$) where $\vert a_1\vert+\vert a_2\vert+\vert a_3\vert=\vert b_1\vert+\vert b_2\vert+\vert a_3\vert=2m+1$ for some $a_1, a_2, a_3, b_1, b_2, b_3 \in \Z$. The level is maximal if and only if $a_1=b_1$, $a_2=b_2$ and $a_3=b_3$ whenever $v_a=v_b$. Now consider the case where $a_1=0, a_2=2m+1, a_3=0, b_1=-1, b_2=-(2m-1), g_3=1$. Then $v_a=(2m+1)(4m+1)=8m^2+6m+1$ and $v_b=-1-(2m-1)(4m+1)+(16m^2+4m+1)=8m^2+6m+1$. Thus $v_a=v_b$ and so level $2m+1$ is submaximal. If a level is submaximal then the next level must necessarily also be submaximal. Therefore every level $l\geq 2m+1$ is submaximal.
\end{proof}

\begin{theorem}
For largest known degree $8$ circulant graphs of arbitrary diameter $k$, every distance partition level $l$ is maximal for $l\leq\lfloor (k+1)/2\rfloor$ in the sense determined by the upper bound $M_{AC}(8,l)$.
\label{theorem:B3}
\end{theorem}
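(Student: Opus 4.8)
The plan is to follow the template of Theorems~\ref{theorem:B1} and~\ref{theorem:B2}, now with the four generators of a dimension-$4$ graph, carrying out one representative case --- say $k$ even, $k=2m$, so that $n=L(8,k)=8m^4+8m^3+12m^2+4m$ --- and noting that the odd case $k=2m+1$ is handled the same way. Let $\{g_1,g_2,g_3,g_4\}$, with $g_1=1$, be the generator set of the construction of~\cite{Lewis}, written out as explicit polynomials in $m$. From these formulae I would first record the three arithmetic facts on which everything rests: \textbf{(G)} a gap property, $g_{i+1}>2\lfloor(k+1)/2\rfloor\cdot g_i$ for $i=1,2,3$ (so in particular $1=g_1<g_2<g_3<g_4$); \textbf{(T)} a threshold inequality, $\lfloor(k+1)/2\rfloor\cdot g_4<n$; and \textbf{(E)} an edge bound, namely that $n$ admits no representation as an integer combination $c_1g_1+c_2g_2+c_3g_3+c_4g_4$ with $|c_1|+|c_2|+|c_3|+|c_4|\le 2\lfloor(k+1)/2\rfloor$. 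Each is the direct degree-$8$ analogue of an ingredient of the degree-$6$ proof; in particular (E) plays the role of ``$n$ needs $6m+1>4m$ edges'' in Theorem~\ref{theorem:B2}.

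Given these, the argument runs as before. Let $v_a,v_b$ be vertices at level $l\le\lfloor(k+1)/2\rfloor$. Since $\Z_n$ is Abelian we may write $v_a\equiv a_1g_1+a_2g_2+a_3g_3+a_4g_4\pmod n$ and $v_b\equiv b_1g_1+b_2g_2+b_3g_3+b_4g_4\pmod n$ with $\sum_i|a_i|\le l$ and $\sum_i|b_i|\le l$; set $w_a=\sum_i a_ig_i$ and $w_b=\sum_i b_ig_i$ in $\Z$. As in this section's earlier proofs, level $l$ is maximal once we show that $v_a=v_b$ forces $a_i=b_i$ for every $i$, the lower levels being maximal by induction. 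So suppose $v_a=v_b$; then $w_a-w_b\equiv 0\pmod n$, and since $|w_a|,|w_b|\le l\,g_4\le\lfloor(k+1)/2\rfloor\,g_4<n$ by~(T), we get $|w_a-w_b|<2n$, hence $w_a-w_b\in\{0,\pm n\}$.

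In the case $w_a-w_b=\pm n$, up to sign $n=\sum_i(a_i-b_i)g_i$ is an integer combination of total weight $\sum_i|a_i-b_i|\le\sum_i|a_i|+\sum_i|b_i|\le 2l\le 2\lfloor(k+1)/2\rfloor$, contradicting~(E); so this case does not occur. In the case $w_a-w_b=0$ we peel coordinates from the top using~(G): if $a_4\neq b_4$ then $|(a_4-b_4)g_4|\ge g_4$, whereas $(a_4-b_4)g_4=-\sum_{i\le 3}(a_i-b_i)g_i$, so $g_4\le\bigl(\sum_{i\le 3}|a_i-b_i|\bigr)g_3\le 2l\,g_3\le 2\lfloor(k+1)/2\rfloor\,g_3$, contradicting~(G); hence $a_4=b_4$. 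Repeating the same step with $g_3$ and then $g_2$ gives $a_3=b_3$ and $a_2=b_2$, and then $a_1=b_1$. Either way $a_i=b_i$ for all $i$, so level $l$ is maximal.

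The skeleton above is essentially forced by the degree-$6$ proof; the real work lies in the arithmetic, and this is where I expect the main obstacle. One must pin down the exact generator polynomials for both parities of $k$ and verify~(G) and~(T) term by term --- routine but tedious --- and, more seriously, establish~(E). Unlike the three-generator situation, with four generators there are many ways to balance one generator against a combination of the others, so proving that \emph{every} representation of $n$ uses more than $2\lfloor(k+1)/2\rfloor$ edges, not merely the obvious greedy one, calls for a careful descent: reduce an arbitrary representation to the canonical one by repeatedly applying the gap property~(G), then count. I would expect this descent, together with the corresponding bookkeeping for odd $k$, to occupy most of the proof, the remaining steps being the routine adaptation of Theorem~\ref{theorem:B2}.
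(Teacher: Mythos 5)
Your skeleton (reduce to showing that $v_a=v_b$ forces equality of the coefficient vectors) matches the paper, but the three arithmetic facts you propose to power the argument are not available for the actual degree-$8$ generator sets, and two of them are simply false. For $k=2m$ the construction of \cite{Lewis} has $n=8m^4+8m^3+12m^2+4m$ with $g_1=1$, $g_2=4m^3+4m^2+6m+1$, $g_3=4m^4+4m^2-4m$, $g_4=4m^4+4m^2-2m$. Here $g_4-g_3=2m$ and $2m\,g_2=8m^4+8m^3+12m^2+2m>g_3$, so your gap property (G) fails for $i=2$ and (spectacularly) for $i=3$: the two largest generators are nearly equal, each about $n/2$, rather than superincreasing. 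Your threshold (T) also fails: $m\,g_4=4m^5+4m^3-2m^2>n$ already for $m\ge 3$, so you cannot conclude $w_a-w_b\in\{0,\pm n\}$; arbitrary multiples $cn$ must be allowed. Consequently the ``peel coordinates from the top'' descent has no footing --- one can certainly balance $(a_4-b_4)g_4$ against a short combination of $g_3$ and the small generators (e.g.\ $g_4-g_3=2m\,g_1$), and the level-$(m+1)$ collision $mg_1+g_3=-mg_1+g_4$ exhibited in Theorem~\ref{theorem:C2} shows exactly this mechanism; what saves levels $l\le m$ is not a size hierarchy among the generators but finer congruence information, which your outline never invokes.

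The paper's proof works differently. It observes that $g_1\equiv g_2\equiv 1$ and $g_3\equiv g_4\equiv n\equiv 0\pmod{2m}$, so $(a_1-b_1)+(a_2-b_2)\equiv 0\pmod{2m}$; the extreme case $\vert(a_1-b_1)+(a_2-b_2)\vert=2m$ (possible only when $l=m$, with $a_3=a_4=b_3=b_4=0$) is disposed of by a direct size estimate, and in the main case $(a_1-b_1)+(a_2-b_2)=0$ it writes $a_i-b_i=x_im+y_i$, substitutes into $\sum_i(a_i-b_i)g_i=cn$, and equates coefficients of powers of $m$ to get a small linear system in the $x_i,y_i,c$; the weight constraint $\sum_i\vert a_i-b_i\vert\le 2m$ then forces $c=0$ and $a_i=b_i$ for all $i$. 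So the ``routine adaptation of Theorem~\ref{theorem:B2}'' you anticipate does not exist for dimension $4$: the generator structure changes qualitatively, and the coefficient-comparison/congruence argument is the missing idea your proposal would need.
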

\begin{proof}
We give the proof just for the case $k\equiv 0$ (mod 2), $k\geq 4$. The proof for the case $k\equiv 1$ (mod 2) is similar. So let $k=2m$ for $m\geq 2$.
For a largestknown circulant graph of degree 8 and arbitrary diameter $2m$, the order is $L(8,k)=n=8m^4+8m^3+12m^2+4m$ and a generator set is $\{g_1,g_2,g_3,g_4\}$ where $g_1=1$, $g_2=4m^3+4m^2+6m+1$, $g_3=4m^4+4m^2-4m$ and $g_4=4m^4+4m^2-2m$. Let $v_a$ and $v_b$ be vertices in level $l \leq m$, relative to an arbitrary root vertex. Then $v_a \equiv\sum_{i=1}^4{a_ig_i}$ (mod $n$) and $v_b \equiv\sum_{i=1}^4{b_ig_i}$ (mod $n$) where $\sum_{i=1}^4{\vert a_i\vert}=\sum_{i=1}^4{\vert b_i\vert}=l\leq m$ for some $a_i, b_i \in \Z$. The level is maximal if and only if $a_i=b_i$ for $i=1,2,3,4$ whenever $v_a=v_b$.

Suppose that $v_a=v_b$. Then $\sum_{i=1}^4{(a_i-b_i)g_i}=cn$ for some $c\in\Z$. We note that $n\equiv g_3\equiv g_4\equiv 0$ (mod $2m$) and $g_1\equiv g_2\equiv 1$ (mod $2m$). Hence $(a_1-b_1)+(a_2-b_2)\equiv 0$ (mod $2m$). First suppose $\vert (a_1-b_1)+(a_2-b_2)\vert =2m$. Then $l=m, \vert a_1+a_2\vert =m,  \vert b_1+b_2\vert =m$, and $a_3=a_4=b_3=b_4=0$. Without loss of generality suppose that $a_1+a_2=-m$ and $b_1+b_2=m$. Then $v_a-v_b\equiv (a_1-b_1)+((m-a_1)-(m-b_1))(4m^3+4m^2+6m+1)\equiv (b_1-a_1)(4m^3+4m^2+6m)$ (mod $n$). But $(b_1-a_1)(4m^3+4m^2+6m)\leq2m(4m^3+4m^2+6m)<n$. Thus $v_a-v_b=(b_1-a_1)(4m^3+4m^2+6m)=0$. Hence $a_1=b_1$. So $a_2=b_2$ and the level is maximal.

Now suppose that $(a_1-b_1)+(a_2-b_2)=0$, so that $(a_1-b_1)(g_1-g_2)+(a_3-b_3)g_3+(a_4-b_4)g_4=cn$. Note that $\sum_{i=1}^4{\vert a_i-b_i\vert}\leq2m$, and let $a_i-b_i=x_im+y_i$ for $x_i, y_i\in\Z$ for $i=1,3,4$. Then we have $-(x_1m+y_1)(4m^3+4m^2+6m)+(x_3m+y_3)(4m^4+4m^2-4m)+(x_4m+y_4)(4m^4+4m^2-2m)=c(8m^4+8m^3+12m^2+4m)$. Equating coefficients of powers of $m$;
$m^5$: $4x_3+4x_4=0$.
$m^4$: $-4x_1+4y_3+4y_4=8c$.
$m^3$: $-4y_1-4x_1+4x_3+4x_4=8c$.
$m^2$: $-4y_1-6x_1+4y_3-4x_3+4y_4-2x_4=12c$.
$m$:  $-6y_1-4y_3-2y_4=4c$.

As $x_4=-x_3$ and $\sum_{i=1}^4{\vert a_i-b_i\vert}\leq2m$ we must have $x_3=0$ or $x_3=1$.
If $x_3=0$ then $x_4=0, x_1=-2c, y_1=0, y_3=-2c$ and $y_4=2c$. Thus $\sum_{i=1}^4{\vert a_i-b_i\vert}=\vert 2cm\vert+\vert 2cm\vert+\vert 2c\vert+\vert 2c\vert=4\vert c\vert(m+1)\leq2m$. So $c=0$ and $a_i=b_i$ for $i=1,2,3,4$.
If $x_3=1$ then $x_4=-1, x_1=1-2c, y_1=-1, y_3=2-2c$ and $y_4=2c-1$. Thus $\sum_{i=1}^4{\vert a_i-b_i\vert}=\vert (1-2c)m-1\vert+\vert(1-2c)m-1\vert+\vert m+2-2c\vert+\vert -m+2c-1\vert$. If $c=0$, then $\sum_{i=1}^4{\vert a_i-b_i\vert}=4m+1$. If $c=1$, then $\sum_{i=1}^4{\vert a_i-b_i\vert}=4m$. For any other value of $c$, $\sum_{i=1}^4{\vert a_i-b_i\vert }\geq 8m$. However we have that $\sum_{i=1}^4{\vert a_i-b_i\vert }\leq 2m$. Hence there is no solution with $x_3=1$.
This completes the proof that level $l$ is maximal for any $l\leq m$.
\end{proof}

\begin{theorem}
For largest known degree $8$ circulant graphs of arbitrary diameter $k$, every distance partition level $l$ is submaximal for $l>\lfloor (k+1)/2\rfloor$ in the sense determined by the upper bound $M_{AC}(8,l)$.
\label{theorem:C2}
\end{theorem}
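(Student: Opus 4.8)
The plan is to follow the template of Theorem~\ref{theorem:C1}: exhibit a single collision at the first level beyond the maximal zone, and then invoke the fact, used already in the proof of Theorem~\ref{theorem:C1}, that once a level is submaximal so is every later level. As in Theorem~\ref{theorem:B3} I would present the case $k=2m$ with $m\geq 2$ in detail, the odd-diameter case being analogous with its own generator set, and reuse the data of Theorem~\ref{theorem:B3}: $n=8m^4+8m^3+12m^2+4m$, $g_1=1$, $g_2=4m^3+4m^2+6m+1$, $g_3=4m^4+4m^2-4m$, $g_4=4m^4+4m^2-2m$. Here $\lfloor(k+1)/2\rfloor=m$, so the level to examine is $l=m+1$.

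The mechanism is the short relation $g_4-g_3-2mg_1=0$, which holds already in $\Z$ because $g_4-g_3=2m$; as a closed walk it uses $2m+2$ edges. Transferring $mg_1$ to the other side turns it into the identity $g_4-mg_1=g_3+mg_1$, in which each side is a walk of $m+1$ edges from the root vertex; this plays the role that the closed walk $g_1+4mg_2-g_3=0$ of $4m+2$ edges plays for degree~6 in Theorem~\ref{theorem:C1}. The common endpoint is $4m^4+4m^2-3m$, which is less than $n$, and it is reached by the two distinct coefficient vectors $(a_1,a_2,a_3,a_4)=(-m,0,0,1)$ and $(b_1,b_2,b_3,b_4)=(m,0,1,0)$.

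To deduce that level $m+1$ is submaximal I would note that every vertex at distance exactly $m+1$ is reached by a walk of $m+1$ edges which, on collecting equal generators, yields an integer coefficient vector with $\vert a_1\vert+\vert a_2\vert+\vert a_3\vert+\vert a_4\vert=m+1$ (the sum cannot be smaller, else the vertex would lie at distance below $m+1$). Hence level $m+1$ is contained in the image of the $LM_{AC}(8,m+1)$ coefficient vectors of weight $m+1$ under reduction mod $n$, and since the two vectors above have the same image, that image, and therefore $n_{m+1}$, is strictly smaller than $LM_{AC}(8,m+1)$. Thus level $m+1$ is submaximal, and by the propagation step of Theorem~\ref{theorem:C1} (or directly, by padding both walks with $t$ extra copies of $g_2$, which preserves the collision and gives distinct weight-$(m+1+t)$ vectors) every level $l>\lfloor(k+1)/2\rfloor$ is submaximal.

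Being purely an existence argument, the proof has no serious obstacle; the only points that need care are the collapsing-walk observation in the third paragraph — so that a single collision really does force $n_{m+1}<LM_{AC}(8,m+1)$ rather than merely permitting it — and the verification that the analogue of $g_4-g_3=2m$ holds for the odd-diameter generator set and again balances into two walks of $\lfloor(k+1)/2\rfloor+1$ edges.
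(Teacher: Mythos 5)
Your proposal is correct and takes essentially the same route as the paper: the same even-diameter case with the same generator set, the identical collision $mg_1+g_3=-mg_1+g_4=4m^4+4m^2-3m$ at level $m+1$ (the paper's coefficient vectors $(m,0,1,0)$ and $(-m,0,0,1)$, with the roles of $v_a$ and $v_b$ swapped), and the same propagation to all later levels. The only difference is that you justify explicitly why one collision forces $n_{m+1}<LM_{AC}(8,m+1)$ and why submaximality propagates (padding with copies of $g_2$), steps the paper merely asserts.
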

\begin{proof}

We give the proof just for the case $k\equiv 0$ (mod 2), $k\geq 4$. The proof for the case $k\equiv 1$ (mod 2) is similar. So let $k=2m$ for $m\geq 2$.
For a best circulant graph of degree 8 and arbitrary diameter $2m$, the order is $L(8,k)=n=8m^4+8m^3+12m^2+4m$ and a generator set is $\{g_1,g_2,g_3,g_4\}$ where $g_1=1$, $g_2=4m^3+4m^2+6m+1$, $g_3=4m^4+4m^2-4m$ and $g_4=4m^4+4m^2-2m$. Let $v_a$ and $v_b$ be vertices in level $m+1$, relative to an arbitrary root vertex. Then $v_a \equiv\sum_{i=1}^4{a_ig_i}$ (mod $n$) and $v_b \equiv\sum_{i=1}^4{b_ig_i}$ (mod $n$) where $\sum_{i=1}^4{\vert a_i\vert}=\sum_{i=1}^4{\vert b_i\vert}=m+1$ for some $a_i, b_i \in \Z$. The level is maximal if and only if $a_i=b_i$ for $i=1,2,3,4$ whenever $v_a=v_b$.
Now consider the case where $a_1=m, a_2=0, a_3=1, a_4=0, b_1=-m, b_2=0, g_3=0, b_4=1$. Then $v_a=m+(4m^3+4m^2-4m)=4m^3+4m^2-3m$ and $v_b=-m+(4m^3+4m^2-2m)=4m^3+4m^2-3m$. Thus $v_a=v_b$ and so level $m+1$ is submaximal. If a level is submaximal then the next level must necessarily also be submaximal. Therefore every level $l\geq m+1$ is submaximal.
\end{proof}


\section {Submaximal levels}

Now we consider the evolution of the size of the submaximal levels in a largest known graph's distance partition. Clearly the difference between the order of the graph, $DF(d,k)$ or $L(d,k)$, and the corresponding upper bound $M_{AC}(d,k)$ will be equal to the sum for all levels of the shortfall in each level relative to its maximum value.

So for a graph of degree $d$ and diameter $k$ we define the \it level defect \rm $D(d,k,l)$ for each level $l$ in its distance partition profile to be the difference between its actual size $s_l$ and the maximal size $LM_{AC}(d,l)$: $D(d,k,l) = LM_{AC}(d,l)-s_l$. Then for largest known graphs:
\[ \sum ^k _{l=0} D(d,k,l) =
\begin {cases}
 M_{AC}(d,k) - DF(d,k)& \mbox {for } d=6, 7 \\
 M_{AC}(d,k) - L(d,k) & \mbox {for } d = 8, 9. \\
\end {cases}
\]
For example for degree $d=6$
\[ \sum ^k _{l=0} D(6,k,l) = 
\begin {cases}
(4k^3+6k^2+18k)/27 & \mbox {for } k \equiv 0 \pmod 3 \\
(4k^3+6k^2-6k-4)/27 & \mbox {for } k \equiv 1 \pmod 3 \\
(4k^3+6k^2+18k+16)/27 & \mbox {for } k \equiv 2 \pmod 3. \\
\end {cases}
\]

Considering degree 6, for diameter $k \leq 15$, the submaximal level defects are shown in Table \ref {table:4Z}.

\begin{table}[!htbp]
\small
  \centering
  \caption{\small Submaximal level defects, $D(6,k,l)$, of the distance partition profiles for largest known graphs of degree 6}
\begin{tabularx} {\linewidth} { @ { } c  @ {} r *{16} {R} }
\noalign {\vskip 1mm}  
\hline\hline 
\noalign {\vskip 1mm} 
Diameter & Order & \multicolumn {11} {l} {Distance partition level, $l$} \\ 
$k$ & $n$ & 0 & 1 & 2 & 3 & 4 & 5 & 6 & 7 & 8 & 9 & 10 & 11 & 12 & 13 & 14 & 15 \\
\hline 
\noalign {\vskip 1mm} 
3 & 55    & 0 & 0     & 0     & 8     &       &       &       &       &       &       &       &       &       &       &       &  \\
4 & 117   & 0 & 0     & 0     & 0     & 12    &       &       &       &       &       &       &       &       &       &       &  \\
5 & 203   & 0 & 0     & 0     & 0     & 4     & 24    &       &       &       &       &       &       &       &       &       &  \\
6 & 333   & 0 & 0     & 0     & 0     & 0     & 8     & 36    &       &       &       &       &       &       &       &       &  \\
7 & 515   & 0 & 0     & 0     & 0     & 0     & 0     & 12    & 48    &       &       &       &       &       &       &       &  \\
8 & 737   & 0 & 0     & 0     & 0     & 0     & 0     & 4     & 24    & 68    &       &       &       &       &       &       &  \\
9 & 1027  & 0 & 0     & 0     & 0     & 0     & 0     & 0     & 8     & 36    & 88    &       &       &       &       &       &  \\
10 & 1393  & 0 & 0     & 0     & 0     & 0     & 0     & 0     & 0     & 12    & 48    & 108   &       &       &       &       &  \\
11 & 1815  & 0 & 0     & 0     & 0     & 0     & 0     & 0     & 0     & 4     & 24    & 68    & 136   &       &       &       &  \\
12 & 2329  & 0 & 0     & 0     & 0     & 0     & 0     & 0     & 0     & 0     & 8     & 36    & 88    & 164   &       &       &  \\
13 & 2943  & 0 & 0     & 0     & 0     & 0     & 0     & 0     & 0     & 0     & 0     & 12    & 48    & 108   & 192   &       &  \\
14 & 3629  & 0 & 0     & 0     & 0     & 0     & 0     & 0     & 0     & 0     & 0     & 4     & 24    & 68    & 136   & 228   &  \\
15 & 4431  & 0 & 0     & 0     & 0     & 0     & 0     & 0     & 0     & 0     & 0     & 0     & 8     & 36    & 88    & 164   & 264 \\
    \hline
    \end{tabularx}%
  \label{table:4Z}%
\end{table}%

Reading the columns of the table upward from the zero elements (where the level is maximal), there is one sequence of submaximal level defects for even levels: $4,12,36,68,108,164,$ $228,\ldots,$ and one for odd levels: $8,24,48,88,136,192,264,\ldots.$ The reason there are two different alternating sequences depends on the fact that for every increase of 3 in the diameter an extra two levels become maximal. The odd levels are related to an increase of 2 in the diameter and the even levels to an increase of 1. The second order differences of both theses sequences display the same repeating cycle of length three: $8,8,16,\ldots.$
Taken in reverse order these sequences define the evolution of a new level as the diameter is increased. For each level $l$ the level defect decreases in a fixed pattern depending on the parity of the level until the level reaches its maximal value $LM_{AC}(6,l)$ as defined by the upper bound $M_{AC}(6,l)$. Reading along each row of Table \ref {table:4Z}, these level evolutions translate into the submaximal level defects of the distance partition profile of a graph of the corresponding diameter. For the same reason that every increase of 3 in the diameter is paired with an extra two levels becoming maximal, we find three different submaximal level defect sequences, one for each value of the diameter $k \pmod 3 $:
\[  
\begin {cases}
8, 36, 88, 164, 264, \ldots & \mbox {for } k \equiv 0 \pmod 3 \\
12, 48, 108, 192, 300, \ldots & \mbox {for } k \equiv 1 \pmod 3 \\
4, 24, 68, 136, 228, \ldots & \mbox {for } k \equiv 2 \pmod 3. \\
\end {cases}
\]

The second order differences of all three sequences have the same constant value of 24. The level defects for degree 7 follow a similar form in having two alternating sequences along levels, both with second order differences displaying a repeating cycle of length three, $16, 16, 32,\ldots;$ and three along the profiles for each diameter, all with a constant second order difference of 48.

For degree 8 there is only one sequence of submaximal level defects valid for all levels: $8, 24, 64,$ $128, 232, 376, 576, 832, 1160, 1560,\ldots,$ with the exception that the final value in any of the sequences is always augmented by 1. The reason they are all the same, as distinct from the two versions for dimension 3, stems from the fact that for every increase of 2 in the diameter a single extra level becomes maximal. Therefore the context is similar for each level. The third order differences of this sequence form a repeated cycle of length two, $0,16,\dots,$ except that the final level in any sequence is 1 higher. For the same reason, reading along the profile for each diameter we find two different submaximal level defect sequences (instead of three as for dimension 3):
\[  
\begin {cases}
8, 64, 232, 576, 1160, 2048 \ldots & \mbox {for } k \equiv 0 \pmod 2 \\
24, 128, 376, 832, 1560, 2624 \ldots & \mbox {for } k \equiv 1 \pmod 2. \\
\end {cases}
\]
both with a constant third order difference of 64. Again the value of the final level is increased by 1. The level defects for degree 9 follow a similar form in having a single sequence along levels with third order difference displaying a repeating cycle of length two, $0, 32,\ldots;$ and two along the profiles for each diameter, both with a constant third order difference of 128.

The sequential second or third order differences between the size of maximal levels and submaximal level defects for degree 6 to 9 is summarised in Table \ref {table:4A}, along with a parametrisation in terms of the dimension $f$. For each degree the relevant order difference for submaximal level defects is greater than the size of the maximal levels by a multiple of the dimension, indicating a relation between the sequence of submaximal level defects and the Abelian Cayley upper bound function $M_{AC}(d,k)$.

\begin{table} [!htbp]
\small
\caption{\small Sequential differences of size of maximal levels and submaximal level defects for largest known graphs of degree 6 to 9, also parametrised in terms of the dimension $f$.} 
\centering 
\begin{tabular}{ @ { } c c c c c } 
\noalign {\vskip 2mm}  
\hline\hline 
\noalign {\vskip 1mm} 
Dimension, & Degree, & Order of & Difference for & Difference for \\
$f$ & $d$ & difference & maximal levels & submaximal level defects \\
\hline
\noalign {\vskip 1mm} 
3 & 6 & 2 & 8 & 24 \\
& 7 & 2 & 16 & 48 \\
\noalign {\vskip 1mm} 
4 & 8 & 3 & 16 & 64 \\
& 9 & 3 & 32 & 128 \\
\hline
\noalign {\vskip 2mm} 
$f$ & $2f$ & $f-1$ & $2^f$ & $f2^f$ \\
\noalign {\vskip 1mm}
& $2f+1$ & $f-1$ & $2^{f+1}$ & $f2^{f+1}$ \\
\noalign {\vskip 1mm} 
\hline
\end{tabular}
\label{table:4A} 
\end{table}


\section {Odd girth of extremal and largest known circulant graphs of degree 2 to 9}

Any circulant graph of degree $d\geq 3$ has at least two distinct generators which, taken as pairs in either order, generate two distinct paths of length 2 between a single pair of vertices. Hence these graphs have girth of at most 4. However it is noteworthy that the extremal and largest known circulant graphs of degree 2 to 9 all have odd girth that is maximal for their diameter. The proofs that such graphs of diameter $k$ have odd girth $2k+1$ are presented for even degree only, but the proofs for odd degree are similar. On the other hand, the circulant graphs corresponding to Chen and Jia's lower bound have lower odd girth: for example, odd girth $k$ for degree 8 where $k\equiv 1 \pmod 4$, and $(4k+1)/5$ for degree 10 where $k\equiv 1\pmod 5$.

\begin{theorem}
For extremal degree $2$ circulant graphs of arbitrary diameter $k$ their odd girth is maximal, $2k+1$.
\label{theorem:D0}
\end{theorem}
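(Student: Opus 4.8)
The plan is to recognise the graph explicitly and then reduce the odd girth to an elementary parity count on closed walks. By Theorem~\ref{theorem:B0}, an extremal degree~$2$ circulant graph of diameter $k$ has order $n = CC(2,k) = 2k+1$ and a generator set $\{g_1\}$ with $g_1 = 1$, so its connection set is $C = \{1,-1\}$ and the graph is the cycle on $2k+1$ vertices. By vertex transitivity we may fix the reference vertex at $0$, so the odd girth is the length of a shortest closed walk from $0$ whose length is odd; indeed any odd closed walk contains an odd cycle no longer than itself, so it suffices to bound the length of an arbitrary odd closed walk from below.

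Next I would parametrise a closed walk of length $\ell$ from $0$ by the number $p$ of forward steps (each adding $g_1 = 1$) and the number $q$ of backward steps (each adding $-g_1 = -1$), so that $\ell = p + q$ and the walk returns to $0$ if and only if $p - q \equiv 0 \pmod{2k+1}$. If $\ell$ is odd then $p - q$ has the same parity as $\ell$ and is therefore odd, hence $p - q \neq 0$, which forces $|p - q| \geq 2k+1$. Consequently $\ell = p + q \geq |p - q| \geq 2k+1$, so every odd cycle in the graph has length at least $2k+1$.

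Finally I would note that this bound is attained: the walk consisting of $2k+1$ consecutive forward steps visits $0, 1, 2, \ldots, 2k$ and returns to $0$, giving a Hamiltonian cycle of odd length $2k+1$. Together with the lower bound this shows the odd girth equals $2k+1$, and this is the maximum possible for a graph of diameter $k$ (a diameter-$k$ graph cannot contain an odd cycle of length exceeding $2k+1$, since a pair of vertices at the largest cycle-distance on such a cycle would otherwise be joined by a graph-geodesic short enough to close up into a strictly shorter odd closed walk). There is no genuine obstacle in this argument; the only points needing a moment's care are the reduction from ``shortest odd cycle'' to ``shortest odd closed walk'' and the observation that $p - q \equiv \ell \pmod 2$.
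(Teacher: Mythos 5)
Your proposal is correct and follows essentially the same route as the paper: identify the graph as the cycle $C_{2k+1}$ and conclude that its (odd) girth is $2k+1$. The parity count on forward/backward steps simply spells out in detail what the paper dismisses as immediate for a cycle graph, so no substantive difference in approach.
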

\begin{proof}
For an extremal circulant graph of degree 2 and arbitrary diameter $k$, the order is $CC(2,k)=n=2k+1$ and a generator set is $\{g_1\}$ where $g_1=1$. The graph is a cycle graph, and it is immediately clear that it has girth $2k+1$ and therefore also odd girth $2k+1$.
\end{proof}

\begin{theorem}
For extremal degree $4$ circulant graphs of arbitrary diameter $k$ their odd girth is maximal, $2k+1$.
\label{theorem:D1}
\end{theorem}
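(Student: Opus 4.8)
The plan is to follow the arithmetic style of Theorem~\ref{theorem:B1} and translate ``odd girth'' into a question about the generator set. Write $n=CC(4,k)=2k^2+2k+1$, $g_1=1$ and $g_2=2k+1$. A closed walk of length $\ell$ from the root vertex amounts to a choice of $\ell$ signed generators summing to $0 \pmod n$; collecting like terms, such a walk exists precisely when there are integers $a_1,a_2$ with $a_1g_1+a_2g_2\equiv 0 \pmod n$, $\vert a_1\vert+\vert a_2\vert\leq\ell$ and $\vert a_1\vert+\vert a_2\vert\equiv\ell \pmod 2$. Since a shortest odd closed walk in any graph is a cycle, the odd girth equals the least odd value of $\vert a_1\vert+\vert a_2\vert$ attained by a nonzero such pair. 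So it suffices to (i) exhibit a pair with $\vert a_1\vert+\vert a_2\vert=2k+1$ and (ii) show no pair has $\vert a_1\vert+\vert a_2\vert$ odd and at most $2k-1$.

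For (i) I would take $a_1=-k$, $a_2=k+1$: then $a_1g_1+a_2g_2=-k+(k+1)(2k+1)=2k^2+2k+1=n\equiv 0 \pmod n$ and $\vert a_1\vert+\vert a_2\vert=2k+1$ is odd, so there is an odd cycle of length $2k+1$ and hence odd girth at most $2k+1$.

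For (ii), suppose $\vert a_1\vert+\vert a_2\vert=s$ is odd with $s\leq 2k-1$ and $a_1g_1+a_2g_2=cn$ for some $c\in\Z$. First I would bound $\vert cn\vert=\vert a_1+(2k+1)a_2\vert\leq(2k+1)s\leq(2k+1)(2k-1)=4k^2-1<4k^2+4k+2=2n$, so $c\in\{-1,0,1\}$. If $c=0$ then $a_1=-(2k+1)a_2$, so $s=(2k+2)\vert a_2\vert$ is even, a contradiction. If $c=\pm 1$, replacing $(a_1,a_2)$ by its negative lets me assume $c=1$, so $a_1=n-(2k+1)a_2$; writing $t=a_2$, a short case analysis on the sign of $n-(2k+1)t$ (the ranges $t\leq 0$, $1\leq t\leq k$, $t\geq k+1$) shows $\vert n-(2k+1)t\vert+\vert t\vert\geq 2k+1$ for every integer $t$, with equality exactly at $t=k$ and $t=k+1$. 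This contradicts $s\leq 2k-1$, so no such pair exists and the odd girth is exactly $2k+1$.

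The reduction to the arithmetic statement and the bound $c\in\{-1,0,1\}$ are routine. The step that needs care --- and is the main obstacle --- is the minimization $\vert n-(2k+1)t\vert+\vert t\vert\geq 2k+1$: one must confirm that the minimum is genuinely $2k+1$ and not smaller, since this is precisely the value the theorem asserts. The order $n=2k^2+2k+1$ is tuned so that the two multiples of $2k+1$ nearest to $n$, namely $k(2k+1)=n-(k+1)$ and $(k+1)(2k+1)=n+k$, each lie at $\ell^1$-distance $2k+1$ from $n$ in the $(a_1,a_2)$ lattice, and verifying this cleanly across the sign cases is where the small amount of real work lies.
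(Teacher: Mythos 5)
Your proof is correct and follows essentially the same route as the paper: exhibit the solution $(a_1,a_2)=(-k,k+1)$ giving $n$, bound $|a_1g_1+a_2g_2|<2n$ to force $c\in\{0,\pm1\}$, kill $c=0$ by parity/divisibility, and show the $\pm n$ case forces $\ell^1$-weight $2k+1$. The only cosmetic difference is that in the $c=\pm1$ case the paper pins down $a_1\equiv k+1\pmod{2k+1}$ while you minimize $|n-(2k+1)t|+|t|$ directly over $t$; both yield the same conclusion, and your minimization claim checks out.
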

\begin{proof}
For an extremal circulant graph of degree 4 and arbitrary diameter $k$, the order is $CC(4,k)=n=2k^2+2k+1$ and a generator set is $\{g_1,g_2\}$ where $g_1=1$ and $g_2=2k+1$.
If the graph contains a cycle of length $2l+1$ then $d_1g_1+d_2g_2\equiv0$ (mod $n$) with $\vert d_1\vert+\vert d_2\vert\leq 2l+1$ and $\vert d_1\vert+\vert d_2\vert\equiv1$ (mod 2) for some $d_1, d_2\in\Z$.

When $l=k$ we have two solutions: $d_1=-k, d_2=k+1$, and $d_1=k+1, d_2=k$, both giving $d_1g_1+d_2g_2=n$. Therefore the graph contains cycles of length $2k+1$. Now suppose $l<k$, so that $\vert d_1\vert+\vert d_2\vert\leq 2k-1$. Then $\vert d_1g_1+d_2g_2\vert\leq(2k-1)g_2=(2k-1)(2k+1)=4k^2-1<2n$, so that $d_1g_1+d_2g_2=0$ or $\pm n$. First consider the case $d_1g_1+d_2g_2=0$. We note that $g_1\equiv1$ (mod $2k+1$) and $g_2\equiv0$ (mod $2k+1$). So $d_1\equiv0$ (mod $2k+1$), and hence $d_1=0$. Therefore $d_2=0$ and there is no odd cycle of length $2l+1$. Now consider $d_1g_1+d_2g_2=\pm n$. Without loss of generality we assume  $d_1g_1+d_2g_2=n$. Noting that $n\equiv k+1$ (mod $2k+1$) we have $d_1\equiv k+1$ (mod $2k+1$). Hence $d_1=-k$ or $d_1=k+1$. If $d_1=-k$ then $d_2g_2=2k^2+3k+1$ and $d_2=k+1$. If $d_1=k+1$ then $d_2g_2=2k^2+k$ and $d_2=k$. In both cases $\vert d_1\vert+\vert d_2\vert=2k+1$. Therefore there are no odd cycles of length $2l+1$ for $l<k$.
\end{proof}

\begin{theorem}
For largest known degree $6$ circulant graphs of arbitrary diameter $k$ their odd girth is maximal, $2k+1$.
\label{theorem:D2}
\end{theorem}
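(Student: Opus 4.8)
The plan is to follow exactly the template used in Theorems \ref{theorem:D1} and \ref{theorem:B2}. As in the proof of Theorem \ref{theorem:B2}, I would treat only the case $k \equiv 0 \pmod 3$, $k \geq 3$, isomorphism class 1, and set $k = 3m$, so $n = 32m^3 + 16m^2 + 6m + 1$ with generator set $g_1 = 1$, $g_2 = 4m+1$, $g_3 = 16m^2 + 4m + 1$; the other cases are analogous. Suppose the graph contains an odd cycle of length $2l+1$. Then there exist integers $d_1, d_2, d_3$ with $d_1 g_1 + d_2 g_2 + d_3 g_3 \equiv 0 \pmod n$, with $|d_1| + |d_2| + |d_3| \leq 2l+1$ and $|d_1| + |d_2| + |d_3| \equiv 1 \pmod 2$. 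I want to show that no such relation exists with $2l+1 < 2k+1$, i.e. with $|d_1|+|d_2|+|d_3| \leq 2k-1 = 6m-1$, and that one does exist for $2l+1 = 2k+1$.

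First the existence part: exhibit an explicit odd closed walk of length $2k+1$. Recall from the proof of Theorem \ref{theorem:B2} that $n = 2m\,g_3 + 2m\,g_2 + (2m+1)\,g_1$, which is a sum of $2m + 2m + (2m+1) = 6m+1 = 2k+1$ generators, hence corresponds to a closed walk of odd length $2k+1$; this gives an odd cycle of length at most $2k+1$, and combined with the non-existence argument below it must have length exactly $2k+1$. For the non-existence part, set $D = d_1 g_1 + d_2 g_2 + d_3 g_3$ and bound $|D|$: since $|d_1|+|d_2|+|d_3| \leq 6m-1$ and $g_3$ is the largest generator, $|D| \leq (6m-1) g_3 < 2n$ (one checks $(6m-1)(16m^2+4m+1) = 96m^3 + 8m^2 + 2m - 1 < 2n = 64m^3 + 32m^2 + 12m + 2$ fails for large $m$, so this crude bound is \emph{not} enough and the argument must be sharpened — see below). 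So the genuine work is to show $D \in \{0, \pm n\}$ forces $|d_1|+|d_2|+|d_3| \geq 2k+1$ or all $d_i = 0$. For $D = 0$: reduce mod $g_3$ and mod $g_2$ exactly as in Theorem \ref{theorem:B2} — the inequality $|(a_1-b_1)g_1 + (a_2-b_2)g_2| \leq 4m g_2 < g_3$ there used the bound $4m$ on the sum of two coordinate-differences, so here one uses that if $d_3 \neq 0$ then $|d_1 g_1 + d_2 g_2| \geq g_3$ while $|d_1| + |d_2| \leq 6m-1$ gives $|d_1 g_1 + d_2 g_2| \leq (6m-1)g_2 = 24m^2 + 2m - 1$, which still exceeds $g_3 = 16m^2+4m+1$ — so again the naive bound fails and one must use the parity/size interplay more carefully, perhaps splitting on the value of $d_3$. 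For $D = \pm n$: use the edge-count argument from Theorem \ref{theorem:B2}, namely that the minimal number of generators summing to $n$ is $6m+1 = 2k+1$, so $|d_1|+|d_2|+|d_3| \geq 2k+1$, giving odd girth $\geq 2k+1$, which matches the walk constructed above.

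The main obstacle, as flagged, is that unlike the degree-4 case the crude triangle-inequality bound $|D| < 2n$ together with $D \in \{0, \pm n\}$ does not immediately pin down the coordinates, because the generators are not spread out enough relative to the allowed coordinate sum $6m-1$ (which is larger than the $4m$ appearing in the maximality proof, Theorem \ref{theorem:B2}). I expect the resolution to mirror the ``minimal representation'' idea already used for the $D = \pm n$ case: show that $n$, and more generally any element of the form $cn$ with $|c| \geq 1$, cannot be written as a $\Z$-combination $d_1 g_1 + d_2 g_2 + d_3 g_3$ with $|d_1|+|d_2|+|d_3| \leq 2k-1$, by an explicit case analysis on $d_3$ (whose magnitude is tightly constrained, $|d_3| \leq 2m-1$ roughly, since each unit of $d_3$ costs about $16m^2$ toward a target of size $\approx 32m^3$) and then on $d_2$, reducing in each case to an impossible size or parity constraint on $d_1$. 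Once the non-existence of short odd cycles is established and the length-$(2k+1)$ cycle exhibited, the odd girth is exactly $2k+1$, completing the proof; the remaining cases $k \equiv 1, 2 \pmod 3$ and isomorphism class 2 are handled by the same computations with the corresponding order formulae and generator sets from Table \ref{table:2A}.
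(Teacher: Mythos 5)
Your overall skeleton matches the paper's proof (exhibit the odd closed walk $n = 2mg_3 + 2mg_2 + (2m+1)g_1$ of length $6m+1$, then rule out shorter odd relations $d_1g_1+d_2g_2+d_3g_3 \equiv 0 \pmod n$ with $|d_1|+|d_2|+|d_3|\leq 6m-1$, handling $\pm n$ by the minimal-edge-count representation of $n$), but the proposal has a genuine gap: you never actually dispose of the cases $D=0$ and $D=\pm 2n$. You correctly observe that the crude bound $|D|<2n$ fails and that the Theorem \ref{theorem:B2}-style size argument for $D=0$ breaks down when the coordinate sum is allowed to reach $6m-1$, but you then only gesture at ``parity/size interplay'' and ``an explicit case analysis on $d_3$ and then $d_2$'' without carrying it out, so the non-existence half of the theorem is not proved.

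The missing idea is a one-line parity argument, which is exactly how the paper closes these cases. Since $|D|\leq(6m-1)g_3 = 96m^3+8m^2+2m-1 < 3n$, one has $D \in \{0,\pm n,\pm 2n\}$. All three generators $g_1=1$, $g_2=4m+1$, $g_3=16m^2+4m+1$ are odd, so $D \equiv d_1+d_2+d_3 \equiv |d_1|+|d_2|+|d_3| \equiv 1 \pmod 2$; hence $D$ is odd and cannot equal $0$ or $\pm 2n$, which are even. This leaves only $D=\pm n$, where your minimal-representation argument (any expression of $n$ as a $\Z$-combination of the generators needs at least $6m+1$ edges) applies and finishes the proof. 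So the elaborate case analysis you anticipate for $D=0$ and for general multiples $cn$ is unnecessary; without the parity observation, however, your writeup as it stands does not establish that short odd cycles are impossible.
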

\begin{proof}
We give the proof just for the case $k\equiv 0$ (mod 3), $k\geq 3$ for isomorphism class 1. The proofs for isomorphism class 2 and for cases $k\equiv 1$ and $k\equiv 2$ (mod 3) are similar. So let $k=3m$ for $m\geq 1$.
For a largest known circulant graph of degree 6 and arbitrary diameter $3m$, the order is $DF(6,k)=n=32m^3+16m^2+6m+1$ and a generator set is $\{g_1,g_2,g_3\}$ where $g_1=1$, $g_2=4m+1$ and $g_3=16m^2+4m+1$.

If the graph contains a cycle of length $2l+1$ then $d_1g_1+d_2g_2+d_3g_3\equiv0$ (mod $n$) with $\vert d_1\vert+\vert d_2\vert+\vert d_3\vert\leq 2l+1$ and $\vert d_1\vert+\vert d_2\vert+\vert d_3\vert\equiv1$ (mod 2) for some $d_1, d_2, d_3\in\Z$. When $l=3m$ we have a solution: $d_1=2m+1, d_2=2m$ and $d_3=2m$, giving $d_1g_1+d_2g_2+d_3g_3=n$. Therefore the graph contains a cycle of length $6m+1$. Now suppose $l<3m$, so that $\vert d_1\vert+\vert d_2\vert+\vert d_3\vert\leq 6m-1$. Then $\vert d_1g_1+d_2g_2+d_3g_3\vert\leq(6m-1)g_3=(6m-1)(16m^2+4m+1)=96m^3+8m^2+2m-1<3n$. So $d_1g_1+d_2g_2+d_3g_3=0, \pm n$ or $\pm 2n$. But if $d_1g_1+d_2g_2+d_3g_3=0$ or $\pm 2n$ then $d_1g_1+d_2g_2+d_3g_3\equiv 0$ (mod 2), whereas $g_1\equiv g_2\equiv g_3\equiv 1$ (mod 2) and $\vert d_1\vert+\vert d_2\vert+\vert d_3\vert=1$ (mod 2), which gives a contradiction and thus admits no solution. So consider $d_1g_1+d_2g_2+d_3g_3=\pm n$. Then without loss of generality we may assume $d_1g_1+d_2g_2+d_3g_3=n$. Now $n=32m^3+16m^2+6m+1=2mg_3+2mg_2+(2m+1)g_1$. This is equivalent to $6m+1$ edges and there is no construction with fewer edges. Thus the graph has no odd cycle with length shorter than $6m+1$.
\end{proof}

\begin{theorem}
For largest known degree $8$ circulant graphs of arbitrary diameter $k$ their odd girth is maximal, $2k+1$.
\label{theorem:D3}
\end{theorem}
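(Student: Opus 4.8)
The plan is to follow the template of the proofs of Theorems~\ref{theorem:D1} and~\ref{theorem:D2}. As in those proofs it suffices to treat a single residue class of $k$ modulo $2$ and a single isomorphism class, so take $k\equiv 0\pmod 2$, write $k=2m$ with $m\geq 2$, and recall that the order is $n=8m^4+8m^3+12m^2+4m$ with generator set $\{g_1,g_2,g_3,g_4\}$, $g_1=1$, $g_2=4m^3+4m^2+6m+1$, $g_3=4m^4+4m^2-4m$, $g_4=4m^4+4m^2-2m$, exactly the data used in Theorems~\ref{theorem:B3} and~\ref{theorem:C2}. The graph has a closed walk of length $2l+1$ precisely when there are $d_1,d_2,d_3,d_4\in\Z$ with $\sum_{i=1}^4 d_ig_i\equiv 0\pmod n$, $\sum_{i=1}^4\vert d_i\vert\leq 2l+1$ and $\sum_{i=1}^4\vert d_i\vert\equiv 1\pmod 2$, so the odd girth is the least odd value of $\sum_{i=1}^4\vert d_i\vert$ over non-trivial tuples of this kind. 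It thus suffices to prove (i) that some tuple attains $\sum\vert d_i\vert=2k+1=4m+1$, and (ii) that no such tuple has $\sum\vert d_i\vert$ odd and $\leq 4m-1$.

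For (i) I would check by direct expansion that $(d_1,d_2,d_3,d_4)=\bigl(-(m+1),\,m+1,\,m,\,-(m-1)\bigr)$ satisfies $d_1g_1+d_2g_2+d_3g_3+d_4g_4=n$, and note that $\sum_{i=1}^4\vert d_i\vert=(m+1)+(m+1)+m+(m-1)=4m+1$ is odd; hence the odd girth is at most $4m+1=2k+1$.

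For (ii), suppose $\sum_{i=1}^4\vert d_i\vert$ is odd with $\sum_{i=1}^4\vert d_i\vert\leq 4m-1$, and write $\sum_{i=1}^4 d_ig_i=cn$ for some $c\in\Z$. A short estimate gives $\vert\sum d_ig_i\vert\leq(4m-1)g_4<2mn$, so $\vert c\vert\leq 2m-1$. I would then invoke the congruence structure from the proof of Theorem~\ref{theorem:B3}: since $n\equiv g_3\equiv g_4\equiv 0\pmod{2m}$ while $g_1\equiv g_2\equiv 1\pmod{2m}$, reduction modulo $2m$ forces $d_1+d_2\equiv 0\pmod{2m}$, and because $\vert d_1\vert+\vert d_2\vert\leq 4m-1<4m$ this leaves only $d_1+d_2\in\{0,\pm 2m\}$. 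By replacing every $d_i$ by $-d_i$ if necessary we may take $d_1+d_2\in\{0,2m\}$. When $d_1+d_2=2m$ we have $\vert d_1\vert+\vert d_2\vert\geq 2m$, hence $\vert d_3\vert+\vert d_4\vert\leq 2m-1$, and substituting $2mg_2=n-2m$ turns the relation into one purely in $d_1,d_3,d_4$ of exactly the kind analysed in Theorem~\ref{theorem:B3}. When $d_1+d_2=0$ the relation becomes $d_1(g_1-g_2)+d_3g_3+d_4g_4=cn$; writing $d_i=x_im+y_i$ with $x_i,y_i\in\Z$ and equating coefficients of the powers of $m$ (as in the proof of Theorem~\ref{theorem:B3}) yields a fixed, $m$-independent linear system in $x_1,x_3,x_4,y_1,y_3,y_4,c$, and the bound $\sum\vert d_i\vert\leq 4m-1$ then admits only finitely many solutions, all of which are excluded by that bound together with the parity requirement $\sum\vert d_i\vert\equiv 1\pmod 2$. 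The resulting contradiction shows there is no odd closed walk, and hence no odd cycle, shorter than $2k+1$; with (i) this gives odd girth exactly $2k+1$. The case $k\equiv 1\pmod 2$ is handled the same way with the corresponding order and generator set.

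The step I expect to be the main obstacle is the case analysis in (ii): unlike the degree~$6$ situation, where $\vert c\vert$ is bounded by a constant and a single parity argument disposes of all spurious values, here $c$ may be as large as $2m-1$, so one must rely on the reduction modulo $2m$ and the substitution $d_i=x_im+y_i$ to collapse the problem to a finite set of coefficient equations before checking each against the length and parity constraints.
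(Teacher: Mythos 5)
Your proposal takes essentially the same route as the paper: reduce odd cycles to integer relations $\sum_{i=1}^4 d_ig_i=cn$ with $\sum\vert d_i\vert$ odd, exhibit an explicit relation of length $4m+1$ (your witness $(-(m+1),\,m+1,\,m,\,-(m-1))$ does indeed sum to $n$, just as the paper's $x_1=x_2=x_3=x_4=1$ witness gives $mn$), and exclude shorter odd lengths via the congruence $d_1+d_2\equiv 0\pmod{2m}$ together with the substitution $d_i=x_im+y_i$ and comparison of coefficients of powers of $m$. The residual case-checking you defer to ``finitely many solutions, all excluded'' is left at the same level of detail as the paper's own ``by straightforward manipulation,'' so the two arguments are essentially identical.
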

\begin{proof}
We give the proof just for the case $k\equiv 0$ (mod 2), $k\geq 4$. The proof for the case $k\equiv 1$ (mod 2) is similar. So let $k=2m$ for $m\geq 2$.
For a largest known circulant graph of degree 8 and arbitrary diameter $2m$, the order is $L(d,k)=n=8m^4+8m^3+12m^2+4m$ and a generator set is $\{g_1,g_2,g_3,g_4\}$ where $g_1=1$, $g_2=4m^3+4m^2+6m+1$, $g_3=4m^4+4m^2-4m$ and $g_4=4m^4+4m^2-2m$.

If the graph contains a cycle of length $2l+1$ then $\sum_{i=1}^4{d_ig_i}=cn$ with $\sum_{i=1}^4{\vert d_i\vert}\leq 2l+1$ and $\sum_{i=1}^4{\vert d_i\vert}\equiv1$ (mod 2) for some $d_i, c\in\Z$. Let $d_i=x_im+y_i$ for some $x_i, y_i\in\Z$ for $i=1, 2, 3, 4$, and let $c=x_5+y_5$ for some $x_5, y_5\in\Z$. So $\sum_{i=1}^4{\vert x_i\vert}\leq4$.
Then we have $(x_1m+y_1)+(x_2m+y_2)(4m^3+4m^2+6m+1)+(x_3m+y_3)(4m^4+4m^2-4m)+(x_4m+y_4)(4m^4+4m^2-2m)=(x_5m+y_5)(8m^4+8m^3+12m^2+4m)$. Equating coefficients of powers of $m$;
$m^5$: $4x_3+4x_4=8x_5$. So $x_3+x_4$ is even. Hence $x_3+x_4=-4, 2, 0, 2$ or $4$.
$m^4$: $4x_2+4y_3+4y_4=8x_5+8y_5$.
$m^3$: $4x_2+4y_2+4x_3+4x_4=12x_5+8y_5$.
$m^2$: $6x_2+4y_2-4x_3+4y_3-2x_4+4y_4=4x_5+12y_5$.
$m^1$:  $x_1+x_2+6y_2-4y_3-2y_4=4y_5$.
$m^0$:  $y_1+y_2=0$.
Noting that $n\equiv g_3 \equiv g_4 \equiv 0$ (mod $2m$) and $g_1 \equiv g_2 \equiv 1$ (mod $2m$), we have $d_1+d_2 \equiv 0$ (mod $2m$). Thus $x_1+x_2=-4, -2, 0, 2$ or $4$. By straighforward manipulation we find that there are no solutions for odd cycle length less than $4m+1$. For length $4m+1$ there are exactly 16 distinct combinations of generator elements corresponding to each combination of $x_i=\pm 1$ for $i=1,2,3,4$. For example with $x_1=x_2=x_3=x_4=1$ we have $y_1=y_2=y_3=0$ and $y_4=1$, giving $\sum_{i=1}^4{d_ig_i}=8m^5+8m^4+12m^3+4m^2=mn$. This proves the graph has odd girth $2k+1$.
\end{proof}


\section {Maximal distance partition levels by vertex type}

We have seen that all the extremal and largest known circulant graphs of degree 2 to 9 of arbitrary diameter $k$, above some threshold, have maximal odd girth, $2k+1$. This means that only in the final distance partition level, $k$, relative to an arbitrary root vertex, is any vertex adjacent to another in the same level. Thus any vertex in level $l$ for $1\leq l\leq k-1$ is adjacent only to vertices in level $l-1$ or $l+1$ and to none in level $l$. In this sense such a vertex may be defined as $thin$. As the degree, $d$, of each vertex is fixed, if it is adjacent to $s$ vertices in level $l-1$ then it must be adjacent to $d-s$ in level $l+1$, and such a vertex is defined to be of type $T_s$. Vertices in level $k$ may be adjacent to others in the same level but not of course to any in a further level. Therefore the type of these vertices is also well-defined by the number of adjacent vertices in the preceding level.


Analysis of the extremal and largest known circulant graphs of degree 2 to 9 reveals a regular structure in the number of vertices of each type in each distance partition level. Examples for graphs of degree 4, 6 and 8, all with diameter 12, are shown in Tables \ref{table:7A}, \ref{table:7B} and \ref{table:7C}.

\begin{table}[h]
\footnotesize
  \centering
\setlength{\tabcolsep}{5pt}
  \caption{\small Distance partition profile by vertex type: extremal graph of degree 4 and diameter 12}
\begin{tabularx}  {\linewidth} {@ {} c *{14} r l}
\noalign {\vskip 1mm}  
\hline\hline 
\noalign {\vskip 1mm}  
Vertex & \multicolumn {13} {l} {Distance partition level} & & Differences \\ 
type   & 0 & 1 & 2 & 3 & 4 & 5 & 6 & 7 & 8 & 9 & 10 & 11 & 12 & & Maximal \\
\hline 
\noalign {\vskip 1mm}  
$T_0$     & 1   &      &      &      &     &       &       &      &       &       &       &       &       &     &      \\
$T_1$     &     & 4    & 4   & 4   & 4   & 4    & 4    & 4   &  4    &  4   &  4   &  4   &   4   &  &  $\Delta^0=4$   \\
$T_2$     &     &      & 4   & 8   & 12  & 16  & 20  & 24 & 28   & 32  & 36   & 40  & 44  &   & $\Delta^1=4$    \\
\hline
\noalign {\vskip 1mm}  
Total    & 1  & 4  & 8  & 12  & 16  & 20  & 24 & 28 & 32  & 36  & 40 & 44  & 48   &  &  $\Delta^1=4$   \\
\hline
    \end{tabularx}%
  \label{table:7A}%
\end{table}%

\begin{table}[h]
\footnotesize
  \centering
\setlength{\tabcolsep}{4pt}
  \caption{\small Distance partition profile by vertex type: largest known graph of degree 6, diameter 12}
    \begin{tabularx} {\linewidth} { @ {} c  *{14} r l @ { } l }
\noalign {\vskip 1mm}  
\hline\hline 
\noalign {\vskip 1mm}  
Vertex & \multicolumn {13} {l} {Distance partition level} &  \multicolumn {3} {c} {Differences}  \\ 
type   & 0 & 1 & 2 & 3 & 4 & 5 & 6 & 7 & 8 & 9 & 10 & 11 & 12 &  & Maximal & Submaximal \\
\hline 
\noalign {\vskip 1mm}  
$T_0$     & 1   &      &      &      &      &       &       &       &       &       &       &       &       &            &            &                \\
$T_1$     &     & 6    & 6   & 6    & 6   & 6    & 6    & 6     &  6   &       &       &       &   &   &  $\Delta^0=6$  &  $\Delta^0=0$      \\
$T_2$     &     &      & 12  & 24  & 36  & 48  & 60  & 72   & 84  & 88   & 76   & 64  & 52  & &  $\Delta^1=12$ &  $\Delta^1=-12$   \\
$T_3$     &     &      &      & 8    & 24  & 48  & 80  & 120  &168 &226  &274  &306  &322 & &  $\Delta^2=8$   &  $\Delta^2=-16$   \\
$T_4$     &    &       &      &       &      &      &       &        &      & 4     & 16   & 28  & 40  &            &            &  $\Delta^1=12$     \\
\hline
\noalign {\vskip 1mm}  
Total    & 1  & 6  & 18  & 38  & 66  & 102  & 146 & 198 & 258  & 318  & 366  & 398  & 414   &  &  $\Delta^2=8$  &  $\Delta^2=-16$  \\
\hline
    \end{tabularx}%
  \label{table:7B}%
\end{table}%

\begin{table}[h]
\footnotesize
  \centering
\setlength{\tabcolsep}{4pt}
  \caption{\small Distance partition profile by vertex type: largest known graph of degree 8, diameter 12}
    \begin{tabularx} {\linewidth} {@ { } c *{14}  r l @ { } l}
\noalign {\vskip 1mm}  
\hline\hline 
\noalign {\vskip 1mm}  
Vertex & \multicolumn {13} {l} {Distance partition level} & \multicolumn {3} {c} {Differences} \\ 
type   & 0 & 1 & 2 & 3 & 4 & 5 & 6 & 7 & 8 & 9 & 10 & 11 & 12 & & Maximal & Submaximal \\
\hline 
\noalign {\vskip1mm}  
$T_0$     & 1   &      &      &      &     &       &       &       &       &       &       &       &       &  &      &              \\
$T_1$     &    & 8     & 8     & 8    & 8     & 8    & 8      & 4      &       &       &       &       &    &   &  $\Delta^0=8$     &  $\Delta^0=0$           \\$T_2$     &    &      & 24   & 48   & 72   & 96  & 120  & 136 & 124  & 100  & 76  & 52  & 26  &    & $\Delta^1=24$     &  $\Delta^1=-24$   \\
$T_3$    &    &     &      & 32     & 96     & 192     & 320   & 476   &  624  & 720   & 752   & 720   & 624  &   & $\Delta^2=32$     &  $\Delta^2=-64$    \\
$T_4$     &   &     &      &      & 16    &  64   & 160   & 328   & 564  & 844   & 1124  & 1356  & 1495  &  &  $\Delta^3=16$     &  $\Delta^3=-48$     \\
$T_5$     &   &     &      &     &      &     &      &   &  32     & 96      & 192      & 320      & 476 &    &  &  $\Delta^2=32$     \\
$T_6$    &   &    &      &      &      &      &     &     &   &    &   &       &   2    &       &       &       \\
\hline
\noalign {\vskip 1mm}  
Total    & 1  & 8  & 32  & 88  & 192  & 360  & 608 & 944 & 1344  & 1760  & 2144  & 2448  & 2623  & &  $\Delta^3=16$     &  $\Delta^3=-48$     \\
\hline
    \end{tabularx}%
  \label{table:7C}%
\end{table}%

In these examples for graphs of even degree we can see that the number of different vertex types increases with degree. Graphs of every degree contain type $T_1$ vertices, with their number remaining constant and equal to the degree within the maximal zone, while becoming absent from the submaximal zone within the first few levels. The number of each subsequent vertex type grows with a constant higher order difference in the maximal zone before reversing with a constant difference of the same order in the submaximal zone. In each case the values of these constants depend only on the degree and are independent of diameter. The graphs of odd degree display a similar structure. Common parameters of all the distance partition profiles by vertex type are presented in Table \ref{table:7D} for the maximal levels and in Table \ref{table:7E} for the submaximal levels.

\begin{table} [!htbp]
\small
\caption{\small Number of vertices of each type within maximal levels of extremal and largest known graphs of degree $d=4$ to $9$ (dimension $f=2$ to $4$).} 
\centering 
\begin{tabular}{ @ { } c c c c c } 
\noalign {\vskip 1mm}  
\hline\hline 
\noalign {\vskip 1mm}  
Vertex & \multicolumn {2} {c} {Even degree $d=2f$} & \multicolumn {2} {c} {Odd degree $d=2f+1$} \\
type $T_s$ & Difference order & Value & Difference order & Value \\
\hline
\noalign {\vskip 2mm}  
$1\leq s\leq f$ & $\Delta^{s-1}$ & $2^s {f\choose s}$ & $\Delta^{s-1}$ & $2^s {f\choose s}$ \\
\noalign {\vskip 2mm}  
$s=f+1$& - & - & $\Delta^{f-1}$ & $2^f $\\
\noalign {\vskip 1mm}  
\hline
\end{tabular}
\label{table:7D} 
\end{table}

\begin{table} [!htbp]
\small
\caption{\small Number of vertices of each type within submaximal levels of extremal and largest known graphs of degree $d=6$ to $9$ (dimension $f=3$ and $4$).} 
\centering 
\begin{tabular}{ @ { } c c c c c } 
\noalign {\vskip 1mm}  
\hline\hline 
\noalign {\vskip 1mm}  
Vertex & \multicolumn {2} {c} {Even degree $d=2f$} & \multicolumn {2} {c} {Odd degree $d=2f+1$} \\
type $T_s$ & Difference order & Value & Difference order & Value \\
\hline
\noalign {\vskip 2mm}  
$2\leq s\leq f$ & $\Delta^{s-1}$ & $-(s-1)2^s {f\choose s}$ & $\Delta^{s-1}$ & $-(s-1)2^s {f\choose s}$ \\
\noalign {\vskip 2mm}  
$s=f+1$&  $\Delta^{f-2}$ & $f2^{f-1} $ & $\Delta^{f-1}$ & $-(f-1)2^f $\\
\noalign {\vskip 1mm}
$s=f+2$& - & - & $\Delta^{f-2}$ & $f2^{f-1} $\\
\hline
\end{tabular}
\label{table:7E} 
\end{table}

We will now prove for circulant graphs of any degree $d$ and arbitrary diameter $k$ that the number of vertices of each type in each maximal level is determined by the same upper bound $M_{AC}(d,k)$ that has been proven to determine the total number of vertices in each maximal level.

\begin{theorem}
For circulant graphs of any degree $d$, the number of vertices, $VT(d,s,l)$, of type $T_s$ in distance partition level $l\geq1$, where the level is maximal, so that the size of the level is $LM_{AC}(d,l)=M_{AC}(d,l)-M_{AC}(d,l-1)$, is given by the following formulae.

For even degree $d=2f$ where $f$ is the dimension, we have:
\[VT(d,s,l) = {f \choose s}\sum_{i=1}^s{(-1)^{s-i}{s\choose i}LM_{AC}(2i,l)} \]
and hence
\[VT(d,s,l) =
\begin{cases}
\ d &\mbox{ for } s=1 \\
\noalign {\vskip 1mm}
\ 2fs\prod_{i=1}^{s-1}{2(f-i)(l-i)/(i+1)^2} &\mbox { for } s\geq 2.
\end{cases}
\]

For odd degree $d=2f+1$ where $f$ is the dimension, we have the recurrence relation:
\[VT(d,s,l)=VT(2f,s,l)+VT(2f,s-1,l-1) \mbox { for } s,l \geq 2 \]
and hence
\[VT(d,s,l) =
\begin{cases}
\ d &\mbox{ for } s=1 \\
\noalign {\vskip 1mm}
\ 2f(f-1)(l-1)+2f &\mbox{ for } s=2 \\
\noalign {\vskip 1mm}
\ 2fs\prod_{i=1}^{s-1}{2(f-i)(l-i)/(i+1)^2} \\
\ + 2f(s-1)\prod_{i=1}^{s-2}{2(f-i)(l-1-i)/(i+1)^2} &\mbox { for } s\geq 3.
\end{cases}
\]

\label{theorem:7F}
\end{theorem}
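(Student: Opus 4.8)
The plan is to read the type of a vertex directly off a coordinate description of the maximal levels. Fix a circulant graph of even degree $d=2f$ with generator set $\{g_1,\dots,g_f\}$, and write $\phi(\mathbf a)=\sum_i a_ig_i\pmod n$ for $\mathbf a=(a_1,\dots,a_f)\in\Z^f$, $A_m=\{\mathbf a\in\Z^f:\sum_i|a_i|=m\}$, and $e_j$ for the $j$th standard basis vector. Since the maximal levels form an initial segment of the distance partition (the maximality theorems above), the hypothesis that level $l$ is maximal means that $\phi$ is injective on the $\ell^1$-ball of radius $l$, and hence that $\phi$ restricts to a bijection from $A_m$ onto level $m$ for every $m\le l$. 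I would also note, via the odd-girth theorems, that every vertex of a level $l\le k-1$ is \emph{thin} --- a shortest path from the root to such a $v$, an edge inside level $l$, and a shortest path back would form a closed odd walk of length $2l+1<2k+1$ --- so that ``type'' has the meaning given in the text.

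The combinatorial heart is the claim that the vertex $v=\phi(\mathbf a)$ of a maximal level $l$ has type $T_s$ with $s=\#\{i:a_i\ne0\}$. For each $i$ with $a_i\ne0$, exactly one of the neighbours $v\pm g_i$ --- the one whose coordinate vector moves $a_i$ one step towards $0$ --- lies in $\phi(A_{l-1})$, hence in level $l-1$ (it is at distance $\le l-1$, and at distance $\ge l-1$ since it is adjacent to $v$). Any remaining neighbour $v\pm g_j$ has a coordinate vector in $A_{l+1}$; were it in level $l-1$ it would equal $\phi(\mathbf c)$ for some $\mathbf c\in A_{l-1}$, whence $v=\phi(\mathbf c\mp e_j)$ with $\mathbf c\mp e_j$ of $\ell^1$-norm $\le l$, which by injectivity of $\phi$ on $A_l$ forces $\mathbf c\mp e_j=\mathbf a$ and so $\mathbf c\in A_{l+1}$, a contradiction. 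Thus $v$ has exactly $s$ neighbours in level $l-1$, and counting the members of $A_l$ with exactly $s$ nonzero entries (choose the $s$ positions, the signs, and a composition of $l$ into $s$ positive parts) gives
\[VT(2f,s,l)=\binom{f}{s}2^{s}\binom{l-1}{s-1}.\]
To recover the two stated forms, substitute $LM_{AC}(2i,l)=\sum_{j\ge1}2^{j}\binom{i}{j}\binom{l-1}{j-1}$ into $\binom{f}{s}\sum_{i=1}^{s}(-1)^{s-i}\binom{s}{i}LM_{AC}(2i,l)$, interchange the two summations, and use the evaluation $\sum_{i=j}^{s}(-1)^{s-i}\binom{s}{i}\binom{i}{j}=\delta_{js}$ (via $\binom{s}{i}\binom{i}{j}=\binom{s}{j}\binom{s-j}{i-j}$ and the binomial theorem), which collapses the $i$-sum and leaves $\binom{f}{s}2^{s}\binom{l-1}{s-1}$; the displayed product is the same quantity rewritten with $\tfrac1{s!\,(s-1)!}=\tfrac{s}{(s!)^2}$, and the case $s=1$ reads $f\cdot LM_{AC}(2,l)=2f=d$.

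For odd degree $d=2f+1$ the connection set also contains the self-inverse element $n/2$, so I would enlarge the description by a bit $\varepsilon\in\{0,1\}$ recording whether $n/2$ is used: level $l$ is in bijection with the pairs $(\mathbf a,\varepsilon)$ satisfying $\sum_i|a_i|+\varepsilon=l$, which is exactly why $LM_{AC}(2f+1,l)=LM_{AC}(2f,l)+LM_{AC}(2f,l-1)$. The $f$ ordinary generators behave as before; the only new edge is $v\mapsto v+n/2$, which toggles $\varepsilon$. For a vertex with $\varepsilon=0$ this edge raises the norm to $l+1$, so such a vertex has type $T_s$ exactly when it has $s$ nonzero $a_i$; for a vertex with $\varepsilon=1$ it restores $\varepsilon=0$ and lowers the norm to $l-1$, adding one further neighbour in level $l-1$, so such a vertex has type $T_s$ exactly when it has $s-1$ nonzero $a_i$. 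Summing the two contributions gives the recurrence $VT(2f+1,s,l)=VT(2f,s,l)+VT(2f,s-1,l-1)$ for $s,l\ge2$, with $s=1$ (and the base level $l=1$) read off directly; substituting the even-degree formula then produces the stated case expressions.

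The step I expect to be the main obstacle is the one in the second paragraph: promoting ``the coordinate vector of a neighbour lies in $A_{l\pm1}$'' to ``the neighbour lies in level $l\pm1$''. This is where injectivity of $\phi$ on every maximal level, the distance-at-most-one property of adjacent vertices, and the position of $l$ within the maximal zone must all be combined; careless handling would let a neighbour sit in level $l$ itself (which can happen only at the top level $l=k$) or mishandle the boundary of the maximal zone. Everything downstream --- the two binomial identities and a single substitution into the recurrence --- is routine bookkeeping.
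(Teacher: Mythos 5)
Your proposal is correct and takes essentially the same route as the paper's proof: both arguments rest on the fact that maximality forces unique minimal representations (so a vertex's type $T_s$ is exactly the number of distinct generators, plus possibly the involution $n/2$, appearing in its representation), and both deduce the odd-degree case from the even one via the recurrence $VT(2f+1,s,l)=VT(2f,s,l)+VT(2f,s-1,l-1)$. The only difference is bookkeeping: you count lattice vectors with exactly $s$ nonzero coordinates, $\binom{f}{s}2^{s}\binom{l-1}{s-1}$, and reconcile this with the stated alternating sum by a binomial identity, whereas the paper runs the inclusion--exclusion over generator subsets directly in terms of $LM_{AC}(2i,l)$; if anything you are more explicit than the paper about the key step that no further neighbours of such a vertex can lie in level $l-1$.
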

\begin{proof}
First consider a circulant graph of even degree $d$ and diameter $k$, being the Cayley graph of a cyclic group with generator set $G=\{ g_1, ..., g_f\}$ where $f=d/2$ is the dimension of the graph. Then the connection set is $C=\{\pm g_1, ..., \pm g_f\}$.

Let $v$ be a vertex at distance $l<k$ from an arbitrary root vertex $u$. Suppose for a contradiction that a path of length $l$ from $u$ to $v$ contains an edge generated by $g_i$ and another edge generated by $-g_i$ for some $g_i\in G$. Then as the group is Abelian the path from $u$ generated from the same set of edges after removing this pair would lead to $v$ after a distance of only $l-2$, contradicting the premise that $v$ is distant $l$ from $u$. Hence if $v$ is distant $l$ from $u$ then for any $g_i\in G$ no path of length $l$ from $u$ to $v$ contains edges generated by both $g_i$ and $-g_i$.

Suppose there exists a path $p$ of length $l\geq 2$ from $u$ to $v$ with two of its edges generated by different generators, say $c_1, c_2$ where $\vert c_1\vert = g_i$ and $\vert c_2\vert = g_j$ for some $i, j$ with $1 \leq i < j \leq f$. Then as the group is Abelian we may reorder the edges of $p$ to generate two distinct paths $p_1=(x_1, ..., x_{l-2}, c_1, c_2)$ and $p_2=(x_1, ..., x_{l-2}, c_2, c_1)$ from $u$ to $v$. Now consider the two vertices $v_1, v_2$ both distant $l-1$ from $u$, reached by following paths $p'_1=(x_1, ..., x_{l-2}, c_1)$ and $p'_2=(x_1, ..., x_{l-2}, c_2)$ from $u$. These are distinct vertices within distance partition level $l-1$ that are adjacent to $v$ in level $l$. Thus $v$ is connected to more than one level $l-1$ vertex and so is not a type $T_1$ vertex. Therefore for any type $T_1$ vertex in level $l\geq 1$ there is only one path from $u$ of length $l$ and each edge of the path is generated by the same element of the connection set.
Also every vertex on this path is also a type $T_1$ vertex generated by the same element. Conversely every element of the connection set generates a unique path from $u$ passing through vertices which are all distinct type $T_1$ vertices while the distance partition level remains in the maximal zone by the definition of the upper bound $M_{AC}(d,l)$. Therefore within the maximal zone the number of type $T_1$ vertices in each level, $VT(d,1,l)$, will be equal to the degree $d$ of the graph. This can also be expressed as the product of the number of such vertices for each generator, $LM_{AC}(2,l)=2$ where $LM_{AC}(d,l)=M_{AC}(d,l)-M_{AC}(d,l-1)$ as defined in an earlier section, and the number of generators, $f$, giving $VT(d,1,l)=2f=d$.

Now consider any two generators $g_i, g_j \in G$ and all vertices in level $l$ that can be reached from root vertex $u$ with a path of length $l$ comprised only of edges $\pm g_i$ and $\pm g_j$. As level $l$ is maximal, by definition of the upper bound $M_{AC}(d,k)$ there are $LM_{AC}(4,l)$ such vertices. We now restrict the vertex set to only those vertices where the path includes at least one edge $\pm g_i$ and one edge $\pm g_j$, so that $l\geq 2$. As the group is Abelian, each of these vertices will have at least one path from $u$ with final edge $\pm g_i$ and at least one path with final edge $\pm g_j$, and clearly no paths with any other final edge. Therefore all these vertices are of type $T_2$. The vertices with paths only of edges $\pm g_i$ or only of edges $\pm g_j$ are excluded. Thus the number of excluded vertices is $2LM_{AC}(2,l)$, and so the number of $T_2$ vertices in level $l$ reached by paths generated by the pair $g_i, g_j$ is given by $LM_{AC}(4,l)-2LM_{AC}(2,l)=4(l-1)$. As there are $f(f-1)/2$ distinct pairs of generators, the total number of $T_2$ vertices in level $l$ is given by $VT(d,2,l)=f(f-1)/2\times 4(l-1)=d(d-2)(l-1)/2$.

Similarly the number of vertices of type $T_3$ in level $l\geq 3$ from any given triad of generators is $LM_{AC}(6,l)-3LM_{AC}(4,l)+3LM_{AC}(2,l)=4(l-1)(l-2)$. As there are $f(f-1)(f-2)/6$ distinct triads of generators, the total number of type $T_3$ vertices in level is $VT(d,3,l)=d(d-2)(d-4)(l-1)(l-2)/12$. Also the number of vertices of type $T_4$ in level $l\geq 4$ from any given set of four generators is $LM_{AC}(8,l)-4LM_{AC}(6,l)+6LM_{AC}(4,l)-4LM_{AC}(2,l)=8(l-1)(l-2)(l-3)/3$, and so the total number of type $T_4$ vertices in level $l$ is $VT(d,4,l)=d(d-2)(d-4)(d-6)(l-1)(l-2)(l-3)/144$. More generally, for even degree $d=2f$ and any $s\geq 1$, \[VT(d,s,l) = {f \choose s}\sum_{i=1}^s{(-1)^{s-i}{s\choose i}LM_{AC}(2i,l)}.\]

This can be reformulated as: $VT(d,s,l)=ds\prod_{i=1}^{s-1}{(d-2i)(l-i)/(i+1)^2}$ for $s\geq 2$.

Now consider a circulant graph of odd degree $d=2f+1$ where $f$ is the dimension, and order $n$. If the generator set is $G=\{ g_1, ..., g_f\}$ then the connection set will be $C=\{\pm g_1, ..., \pm g_f, n/2\}$. As the element $n/2$ has order 2, it can only generate a path of length 1 to create one additional type $T_1$ vertex in level 1 but no extra vertices of type $T_1$ in any higher levels. Consider any level $l\geq 2$ within the maximal zone, and any vertex $v$ in this level. It is possible to reach $v$ by a path comprised either of edges generated by the non order 2 elements $\{\pm g_1,\ldots,\pm g_f\}$ alone or else also by including a single edge generated by the order 2 element $n/2$. It is not possible to reach any given vertex $v$ via paths of both cases as the level is within the maximal zone. So for any $s\geq 2$ the total number of vertices of type $T_s$ in level $l$ is the sum of the vertices reached by paths of either case. The number in the first case is simply the result just determined for a graph of even degree $d=2f$: $VT(2f,s,l)$. For the vertices in the second case, where the path includes an edge $n/2$, as the group is Abelian we may consider only those paths where the final edge is $n/2$. As vertex $v$ is of type $T_s$ in level $l$, then the preceding vertex $v'$ on each path must be of type $T_{s-1}$ in level $l-1$, where the path to $v'$ is comprised of edges from the connection set $C=\{\pm g_1, ..., \pm g_f\}$. Therefore, invoking the result for even degree again, the number of vertices in this case is $VT(2f,s-1,l-1)$. Hence for a circulant graph of odd degree $d=2f+1$ we have $VT(d,s,l)=VT(2f,s,l)+VT(2f,s-1,l-1)$ for $s,l\geq 2$.
\end{proof}

We now consider the number of type $T_1$ vertices in extremal, largest known and lower bound circulant graphs of degree 2 to 10. For these graphs the total number of type $T_1$ vertices is a linear function of the diameter in each case. For degree 2 there are two vertices in each distance partition level after the root and, being a cycle graph, they are all type $T_1$. For degree 3 there are four vertices in each level from level 2; two type $T_1$ and two type $T_2$. For degree $d\geq 4$ the total number of $T_1$ vertices increases by 4 for each increase of 1 in the diameter. This surprising result is true not only for the extremal and largest known graphs but also for the Chen and Jia lower bound constructions. The functions are summarised in Table \ref{table:7G}. The proofs for certain degree 6 and 8 cases are given in Theorems \ref{theorem:7G} and \ref{theorem:7H}. The other cases are proved similarly.

\begin{table} [h]
\small
\caption{\small Total number of type $T_1$ vertices in extremal, largest known and lower bound graphs up to degree 10 for arbitrary diameter $k\geq 2$.} 
\centering 
\begin{tabular}{ @ { } c l l c l } 
\noalign {\vskip 2mm}  
\hline\hline 
\noalign {\vskip 1mm}  
Degree, $d$ & Status & Order &  Isomorphism class & Number of $T_1$ vertices \\
\hline
\noalign {\vskip 1mm}  
2 & Extremal & $CC(2,k)$ & - & $2k$ \\
3 & Extremal & $CC(3,k)$ & - & $2k+1$ \\
\noalign {\vskip 2mm}  
4 & Extremal & $CC(4,k)$ & - & $4k$ \\
5 & Extremal & $CC(5,k)$ & - & $4k-1$ \\
\noalign {\vskip 2mm}  
6 & Largest known & $DF(6,k)$ & 1 & $4k$ \\
6 & Largest known & $DF(6,k)$ & 2 & $4k+2$ \\
\noalign {\vskip 2mm}  
7 & Largest known & $DF(7,k)$ & 1 & $4k-1$ \\
7 & Largest known & $DF(7,k)$ & 2 & $4k+1$ \\
\noalign {\vskip 2mm}  
8 & Largest known & $L(8,k)$ & - & $4k+4$ \\
\noalign {\vskip 2mm}  
9 & Largest known & $L(9,k)$ & 1 & $4k+1$ \\
9 & Largest known & $L(9,k)$ & 2 & $4k+1$ \\
\noalign {\vskip 3mm}  
8 & Lower bound & $CJ(8,k)$ & - & $4k-10$ \\
10 & Lower bound & $CJ(10,k)$ & - & $4k-10$ \\
\noalign {\vskip 1mm}  
\hline

\end{tabular}
\label{table:7G} 
\end{table}

\begin{theorem}
For largest known degree $6$ circulant graphs of diameter $k$, the total number of type $T_1$ vertices is equal to $4k$ for isomorphism class $1$ and equal to $4k+2$ for isomorphism class $2$.

\label{theorem:7G}
\end{theorem}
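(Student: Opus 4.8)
The plan is to combine the description of type $T_1$ vertices obtained inside the proof of Theorem~\ref{theorem:7F} with an explicit, generator-by-generator count of how long a ``pure power'' of a single connection element remains a unique shortest representative before it first meets a competing representation.

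First I would record the characterisation. The reordering argument in the proof of Theorem~\ref{theorem:7F} is valid for every level $l$, not just for $l<k$: two edges of a shortest path carrying different generators always yield two distinct level-$(l-1)$ neighbours of the endpoint, so a vertex $v$ in distance partition level $l$ is of type $T_1$ if and only if $v\equiv\varepsilon j g_i\pmod n$ for a single generator $g_i\in\{g_1,g_2,g_3\}$, a sign $\varepsilon=\pm1$ and $j=l$, and this is the unique shortest representation of $v$ as a signed sum of connection elements. Since $n=DF(6,k)$ is odd, $\Z_n$ has no element of order $2$, so the map $x\mapsto -x$ fixes only the root and pairs the type $T_1$ vertices as $jg_i\leftrightarrow -jg_i$; moreover a type $T_1$ vertex determines its generator $g_i$ and sign $\varepsilon$, so there is no over-counting. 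Hence the total number of type $T_1$ vertices equals $2(t_1+t_2+t_3)$, where $t_i$ is the number of indices $j$ with $1\le j\le k$ for which $jg_i$ lies at circulant distance exactly $j$ with a unique shortest representation. The statement therefore reduces to showing $t_1+t_2+t_3=2k$ for isomorphism class $1$ and $t_1+t_2+t_3=2k+1$ for isomorphism class $2$, in each residue class of $k$ modulo $3$.

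For a fixed case I would compute each $t_i$ by locating the first collision. Take the representative case $k=3m$, isomorphism class $1$, with $g_1=1$, $g_2=4m+1$, $g_3=16m^2+4m+1$ and $n=32m^3+16m^2+6m+1$; the key identities are $g_3=4m\,g_2+g_1$ and $n=2m\,g_3+2m\,g_2+(2m+1)g_1$, the latter already used in the proof of Theorem~\ref{theorem:B2}. From these one shows: $jg_1$ first acquires a second length-$j$ representation, namely $g_2-(4m+1-j)g_1$, precisely at $j=2m+1$; $jg_2$ first acquires the competing representation $g_3-(4m-j)g_2-g_1$ of weight $4m-j+2$, which first ties the weight $j$ at $j=2m+1$; and $(2m+1)g_3\equiv 8m^2\equiv -2m\,g_3-g_2\pmod n$, a second length-$(2m+1)$ representation. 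In each case one checks that at index $2m$ no alternative of weight at most $2m$ exists (including representations that wrap once around $\Z_n$), so each $t_i$ is the unbroken initial run $1,2,\dots,2m$. Thus $t_1=t_2=t_3=2m$ and the total is $2(2m+2m+2m)=12m=4k$, in agreement with the $6$ type-$T_1$ vertices appearing in each of the $2m$ maximal levels of Table~\ref{table:7B} and none beyond. The residue classes $k\equiv1,2\pmod 3$ and isomorphism class $2$ are treated the same way, the only difference being that for isomorphism class $2$ one generator's pure powers survive one step longer, raising the sum to $2k+1$ and the total to $4k+2$.

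I expect the main obstacle to be the collision analysis: for each generator $g_i$ one must rule out \emph{all} signed combinations $ag_1+bg_2+cg_3$ of weight at most the relevant threshold that could reduce modulo $n$ to $\pm t_i g_i$, which is exactly where the precise arithmetic of the generator set enters and where the three residue classes and two isomorphism classes genuinely differ; once the thresholds are fixed, summing them and comparing with $2k$ or $2k+1$ is immediate. A separate small point is level $l=k$, where same-level edges are allowed because the girth is $2k+1$: a vertex there is still of type $T_1$ exactly when all of its shortest paths end in one common connection element, so the pure-power description, and hence the count, is unchanged.
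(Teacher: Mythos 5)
Your framework is sound and, for isomorphism class 1, essentially reproduces the paper's argument: the characterisation of type $T_1$ vertices extracted from the proof of Theorem~\ref{theorem:7F} is valid (also at level $k$, as you note), the reduction to counting, for each generator, the initial run of $j$ with $jg_i$ at distance exactly $j$ and uniquely represented is correct (the property is downward closed, which justifies the ``unbroken run''), and your three collision identities at $j=2m+1$ are exactly the ones the paper uses ($g_2-2mg_1$, $g_3-(2m-1)g_2-g_1$ and $-2mg_3-g_2$). For the absence of collisions up to $j=2m$ you need not recheck anything: Theorem~\ref{theorem:B2} already gives maximality, hence uniqueness of representations, in all levels $l\le 2m$, which combined with Theorem~\ref{theorem:7F} yields six $T_1$ vertices per level there; this is how the paper proceeds.

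The genuine gap is the isomorphism class 2 half of the statement, which you reduce to the sentence ``one generator's pure powers survive one step longer'' without any supporting argument. That extra step is precisely where the $+2$ comes from, and it requires a positive uniqueness proof, not a collision identity: for the class-2 generator set $\{1,\,8m^2+2m,\,8m^2+6m+2\}$ one must show that $(2m+1)g_1$ admits no representation $a_1g_1+a_2g_2+a_3g_3\equiv 2m+1\pmod n$ of weight at most $2m+1$ other than the pure power, i.e.\ rule out all mixed combinations and wrap-arounds; the paper does this by bounding $\vert a_1g_1+a_2g_2+a_3g_3\vert$ below $n-(2m+1)$ and then working modulo $4m+1$ to force $a_2=a_3=0$. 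One must also exhibit the collisions that kill $(2m+1)g_2$, $(2m+1)g_3$ and, one level later, $(2m+2)g_1$ (the paper uses $-g_3-(2m-1)g_2+g_1$, $-(2m-1)g_3+g_2-g_1$ and $g_3-g_2-2mg_1$ respectively). None of this is a routine transcription of the class-1 computation, since the class-2 generators have a different arithmetic shape and you never even write them down; as it stands your proposal asserts the answer for class 2 rather than proving it, so the $4k+2$ claim is not established.
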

\begin{proof}
We give the proof just for the case $k\equiv 0$ (mod 3), $k\geq 3$, both for a graph of isomorphism class 1 and of isomorphism class 2. The proofs for cases $k\equiv 1$ and $k\equiv 2$ (mod 3) are similar. So let $k=3m$ for $m\geq 1$.
By Theorem \ref{theorem:B2} every distance partition level $l$ of a largest known degree 6 graph of diameter $k$ is maximal for $l\leq\lfloor (2k+1)/3\rfloor=2m$. Then by Theorem \ref{theorem:7F} the number of type $T_1$ vertices in level $l$ is equal to the degree $d=6$ for $1\leq l \leq 2m$.

For a largest known circulant graph of degree 6 of isomorphism class 1 and arbitrary diameter $3m$, the order is $DF(6,k)=n=32m^3+16m^2+6m+1$ and a generator set is $\{g_1,g_2,g_3\}$ where $g_1=1$, $g_2=4m+1$ and $g_3=16m^2+4m+1$.
Now we consider the six potential type $T_1$ vertices within level $2m+1$: $(2m+1)(\pm g_1)$, $(2m+1)(\pm g_2)$ and $(2m+1)(\pm g_3)$, relative to root vertex 0. First consider the vertex $g_2-2mg_1$. It can be reached by a path of length $2m+1$ with edges from two distinct generators. Therefore if it is in level $2m+1$ then it is a type $T_2$ vertex or possibly $T_3$ but not $T_1$. But we have $g_2-2mg_1=4m+1-2m=2m+1=(2m+1)g_1$. Thus $(2m+1)g_1$ is not a type $T_1$ vertex in level $2m+1$, and so neither is $(2m+1)(-g_1)$.
Similarly the vertex $g_3-(2m-1)g_2-g_1$ can be reached by a path of length $2m+1$ with edges from three distinct generators. Therefore if it is in level $2m+1$ then it is a type $T_3$ vertex. Now $g_3-(2m-1)g_2-g_1=16m^2+4m+1-(2m-1)(4m+1)-1=8m^2+6m+1=(2m+1)(4m+1)=(2m+1)g_2$. Thus $(2m+1)g_2$ is not a type $T_1$ vertex in level $2m+1$, and so neither is $(2m+1)(-g_2)$.
Finally the vertex $-2mg_3-g_2$ (mod $n$) can be reached by a path of length $2m+1$ with edges from two distinct generators. Therefore if it is in level $2m+1$ then it is a type $T_2$ vertex or possibly $T_3$. Here $2n-2mg_3-g_2=64m^3+32m^2+12m+2-2m(16m^2+4m+1)-(4m+1)=32m^3+24m^2+6m+1=(2m+1)(16m^2+4m+1)=(2m+1)g_3$. Thus $(2m+1)g_3$ is not a type $T_1$ vertex in level $2m+1$, and so neither is $(2m+1)(-g_3)$.
Hence there are no type $T_1$ vertices in the first submaximal level, $2m+1$, and consequently none in any later levels. Therefore for isomorphism class 1 the total number of type $T_1$ vertices is $6\times 2m=12m=4k$.

For a largest known circulant graph of degree 6 of isomorphism class 2 and arbitrary diameter $3m$, the order is $DF(6,k)=n=32m^3+16m^2+6m+1$ and a generator set is $\{g_1,g_2,g_3\}$ where $g_1=1$, $g_2=8m^2+2m$ and $g_3=8m^2+6m+2$.
We again consider the six potential type $T_1$ vertices within level $2m+1$: $(2m+1)(\pm g_1)$, $(2m+1)(\pm g_2)$ and $(2m+1)(\pm g_3)$.
First consider the vertex $-g_3-(2m-1)g_2+g_1$ (mod $n$). It can be reached by a path of length $2m+1$ with edges from three distinct generators. Therefore if it is in level $2m+1$ then it is a type $T_3$ vertex. But $n-g_3-(2m-1)g_2+g_1=32m^3+16m^2+6m+1-(8m^2+6m+2)-(2m-1)(8m^2+2m)+1=16m^3+12m^2+2m=(2m+1)(8m^2+2m)=(2m+1)g_2$. Thus $(2m+1)g_2$ is not a type $T_1$ vertex in level $2m+1$, and so neither is $(2m+1)(-g_2)$.
The vertex $-(2m-1)g_3+g_2-g_1$ (mod $n$) can also be reached by a path of length $2m+1$ with edges from three distinct generators. Therefore if it is in level $2m+1$ then it is a type $T_3$ vertex. Now $n-(2m-1)g_3+g_2-g_1=32m^3+16m^2+6m+1-(2m-1)(8m^2+6m+2)+(8m^2+2m)-1=16m^3+20m^2+10m+2=(2m+1)(8m^2+6m+2)=(2m+1)g_3$. Thus $(2m+1)g_3$ is not a type $T_1$ vertex in level $2m+1$, and so neither is $(2m+1)(-g_3)$.
Suppose for a contradiction that the vertex $(2m+1)g_1$ is not a type $T_1$ vertex in level $2m+1$. Then we have $a_1g_1+a_2g_2+a_3g_3\equiv 2m+1$ (mod $n$) with $\vert a_1\vert +\vert a_2\vert +\vert a_3\vert \leq2m+1$ and $\vert a_2\vert +\vert a_3\vert\geq 1$ for some $a_1, a_2, a_3 \in \Z$. Now  $\vert a_1g_1+a_2g_2+a_3g_3\vert \leq(2m+1)g_3=16m^3+20m^2+10m+2<n-(2m+1)$. Thus $a_1g_1+a_2g_2+a_3g_3=2m+1$. We have $g_1\equiv g_3 \equiv 1$ (mod $4m+1$) and $g_2\equiv 0$ (mod $4m+1$). So $a_1+a_3\equiv 2m+1$ (mod $4m+1$). But $\vert a_1\vert +\vert a_2\vert +\vert a_3\vert \leq2m+1$. So $a_1+a_3=2m+1, a_1\geq0$ and $a_3\geq 0$. Hence $a_2=0$. So we have $(2m+1)-a_3+a_3g_3=2m+1$. Hence $a_3(g_3-1)=0$ and $a_3=0$ contradicting the premise. Therefore vertex $(2m+1)g_1$ is a type $T_1$ vertex in level $2m+1$ and consequently so is $(2m+1)(-g_1)$.
Finally we need to prove that the vertex $(2m+2)g_1$ is not a type $T_1$ vertex. The vertex $g_3-g_2-2mg_1$ can be reached by a path of length $2m+2$ with edges from three distinct generators. Therefore if it is in level $2m+2$ then it is a type $T_3$ vertex. Here $g_3-g_2-2mg_1=8m^2+6m+2-(8m^2+2m)-2m=2m+1=(2m+1)g_1$. Thus $(2m+2)g_1$ is not a type $T_1$ vertex in level $2m+2$, and so neither is $(2m+2)(-g_1)$.
Hence there are two type $T_1$ vertices in the first submaximal level, $2m+1$, none in the second, $2m+2$,  and consequently none in any later levels. Therefore for isomorphism class 2 the total number of type $T_1$ vertices is $6\times 2m+2=12m+2=4k+2$.
\end{proof}

\begin{theorem}
For largest known degree $8$ circulant graphs of diameter $k$, the total number of type $T_1$ vertices is equal to $4k+4$.

\label{theorem:7H}
\end{theorem}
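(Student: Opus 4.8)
The plan is to follow the template of Theorem~\ref{theorem:7G} almost verbatim. I would fix one congruence class of $k$, say $k=2m$ with $m\geq 2$ (the odd case $k=2m+1$ being handled the same way, and commented on at the end), and recall the generator set $g_1=1$, $g_2=4m^3+4m^2+6m+1$, $g_3=4m^4+4m^2-4m$, $g_4=4m^4+4m^2-2m$ with $n=8m^4+8m^3+12m^2+4m$. By Theorem~\ref{theorem:B3} every level $l$ with $1\leq l\leq\lfloor(k+1)/2\rfloor=m$ is maximal, so by Theorem~\ref{theorem:7F} each of those $m$ levels carries exactly $d=8$ type $T_1$ vertices, accounting for $8m$ of them. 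Everything then reduces to counting the type $T_1$ vertices in the submaximal levels $l\geq m+1$.

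The key structural fact (established inside the proof of Theorem~\ref{theorem:7F}) is that a type $T_1$ vertex in level $l$ is the endpoint of a unique length-$l$ path all of whose edges carry one and the same connection-set element; hence the only candidates for type $T_1$ vertices in level $m+1$ are the eight vertices $(m+1)(\pm g_i)$, $i=1,2,3,4$. I would examine each of the eight in turn. For those that fail, the witness is a \emph{second} path of length $m+1$ through two or three distinct generators reaching the same residue modulo $n$, which forces the vertex to be of type $T_2$ or $T_3$ (or, if the second path is even shorter, to live in an earlier level); for example $g_4-g_3+(1-m)g_1=m+1=(m+1)g_1$ immediately kills the pair $(m+1)(\pm g_1)$. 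For those that survive, the certification is a divisibility argument in the style of Theorem~\ref{theorem:B3}: reduce the putative alternative relation $\sum a_ig_i\equiv(m+1)g_i\pmod n$ modulo the small modulus $2m$, using $n\equiv g_3\equiv g_4\equiv0$ and $g_1\equiv g_2\equiv1\pmod{2m}$ together with the identities $g_4-g_3=2mg_1$ and $2m(g_1+g_2)\equiv0$, and show the straight path is the only one. Since negation $x\mapsto-x$ is a level-preserving automorphism fixing the root, the survivors come in $\pm$ pairs, so their count in level $m+1$ is even; matching Table~\ref{table:7C}, the outcome should be that exactly two of the three pairs among $\pm g_2,\pm g_3,\pm g_4$ remain type $T_1$, giving $4$ new type $T_1$ vertices.

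Finally I would push one level further: for each of the four generators whose ray reaches level $m+1$, exhibit a length-$(m+2)$ path through three distinct generators ending at $(m+2)(\pm g_i)$ (exactly as $(2m+2)g_1$ was disposed of in Theorem~\ref{theorem:7G}); since a ray stops contributing type $T_1$ vertices as soon as one of its vertices fails to be type $T_1$, this shows level $m+2$ — and hence every later level — contains no type $T_1$ vertex. Summing, $8m+4=4k+4$. For $k=2m+1$ the maximal zone $1\leq l\leq m+1$ already contributes $8(m+1)=4k+4$, so there one only has to show that \emph{none} of the eight candidates $(m+2)(\pm g_i)$ in the first submaximal level is type $T_1$, which is the same kind of identity-chasing.

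The hard part will be the case analysis in the middle paragraph: pinning down, uniformly in $m$, which of the eight rays extend exactly one level past the maximal zone and supplying for each the correct witness, all the while tracking precisely which multiple of $n$ is subtracted in every reduction. The arithmetic is elementary and of the same flavour as in Theorems~\ref{theorem:B3} and~\ref{theorem:7G}, but with four generators and eight candidates — and with the parity of $k$ and the small-$m$ boundary values adding further subcases — the bookkeeping is the real obstacle.
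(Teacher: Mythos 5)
Your outline is the paper's own proof in miniature: count $8\lfloor(k+1)/2\rfloor$ type $T_1$ vertices in the maximal zone via Theorems~\ref{theorem:B3} and~\ref{theorem:7F}, observe that the only candidates in the first submaximal level are the ray vertices $(m+1)(\pm g_i)$, kill the failures with explicit alternative multi-generator paths (your identity $g_4-g_3-(m-1)g_1=m+1=(m+1)g_1$ is exactly the paper's witness for $\pm g_1$), certify the survivors arithmetically, and then show the next level contains no type $T_1$ vertex at all. So the route is the same; the issue is that what you have deferred is precisely where the mathematical content of this theorem lives. You do not determine \emph{which} rays survive --- you infer ``exactly two of the three pairs among $\pm g_2,\pm g_3,\pm g_4$'' from Table~\ref{table:7C}, which is data for the single diameter $k=12$, not an argument valid for all $m$ --- and the certification of the survivors is described only as ``a divisibility argument in the style of Theorem~\ref{theorem:B3}.'' In the paper, $\pm g_2$ is eliminated by the explicit identity $-g_4+g_3-(m-1)g_2\equiv(m+1)g_2$ (mod $n$), and the positive certification that $(m+1)(\pm g_3)$ and $(m+1)(\pm g_4)$ are genuinely type $T_1$ is substantially more than a reduction mod $2m$: reductions mod $2m$ and mod $4m$ only yield $a_2=-a_1$ and a parity constraint on $a_4$, after which one must write $a_i=x_im+y_i$, equate the resulting quintic in $m$ to $(x_5m+y_5)n$, and compare coefficients of $m^5,\dots,m$ to exclude every alternative representation of weight at most $m+1$. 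Until that computation (and the identification of which rays die) is carried out uniformly in $m$, the ``$+4$'' in $4k+4$ rests on one table entry rather than a proof.

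Two smaller points. Only two generators' rays ($g_3$ and $g_4$, i.e.\ four signed directions) survive into level $m+1$, so ``for each of the four generators whose ray reaches level $m+1$'' should read ``for each of the two surviving generators''; the paper disposes of $(m+2)(\pm g_3)$ and $(m+2)(\pm g_4)$ with length-$(m+2)$ witnesses using four distinct generators, and the failed rays $\pm g_1,\pm g_2$ are handled, as you say, by appending an edge to their level-$(m+1)$ witnesses. Your bookkeeping for odd $k=2m+1$ (maximal zone already gives $8(m+1)=4k+4$, so one must show the first submaximal level carries \emph{no} type $T_1$ vertex) is arithmetically consistent with the statement and with the paper's remark that the odd case is similar, but it too is an unexecuted claim of the same kind. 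The skeleton, the easy witnesses, and the ray-propagation remark for levels beyond $m+2$ all match the paper; the gap is that the decisive case analysis is promised rather than performed.
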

\begin{proof}
We give the proof just for the case $k\equiv 0$ (mod 2), $k\geq 4$. The proof for the case $k\equiv 1$ (mod 2) is similar. So let $k=2m$ for $m\geq 2$.
By Theorem \ref{theorem:B3} every distance partition level $l$ of a largest known degree 8 graph of diameter $k$ is maximal for $l\leq\lfloor (k+1)/2\rfloor=m$. Then by Theorem \ref{theorem:7F} the number of type $T_1$ vertices in level $l$ is equal to the degree $d=8$ for $1\leq l \leq m$.

For a largest known circulant graph of degree 8 and arbitrary diameter $2m$, the order is $L(d,k)=n=8m^4+8m^3+12m^2+4m$ and a generator set is $\{g_1,g_2,g_3,g_4\}$ where $g_1=1$, $g_2=4m^3+4m^2+6m+1$, $g_3=4m^4+4m^2-4m$ and $g_4=4m^4+4m^2-2m$. Now we consider the eight potential type $T_1$ vertices within level $m+1$: $(m+1)(\pm g_1)$, $(m+1)(\pm g_2)$, $(m+1)(\pm g_3)$ and $(m+1)(\pm g_4)$, relative to root vertex 0.

First consider the vertex $g_4-g_3-(m-1)g_1$. It can be reached by a path of length $m+1$ with edges from three distinct generators. Therefore if it is in level $m+1$ then it is a type $T_3$ vertex or possibly $T_4$ but not $T_1$. But we have $g_4-g_3-(m-1)g_1=4m^4+4m^2-2m-(4m^4+4m^2-4m)-(m-1)=m+1=(m+1)g_1$. Thus $(m+1)g_1$ is not a type $T_1$ vertex in level $m+1$, and so neither is $(m+1)(-g_1)$. Similarly the vertex $-g_4+g_3-(m-1)g_2$ (mod $n$) can be reached by a path of length $m+1$ with edges from three distinct generators. Therefore if it is in level $m+1$ then it is a type $T_3$ vertex or possibly $T_4$. Now $n-g_4+g_3-(m-1)g_2=8m^4+8m^3+12m^2+4m-(4m^4+4m^2-2m)+(4m^4+4m^2-4m)-(m-1)(4m^3+4m^2+6m+1)=4m^4+8m^3+10m^2+7m+1=(m+1)(4m^3+4m^2+6m+1)=(m+1)g_2$. Thus $(m+1)g_2$ is not a type $T_1$ vertex in level $m+1$, and so neither is $(m+1)(-g_2)$.

Suppose for a contradiction that the vertex $(m+1)g_3$ is not a type $T_1$ vertex in level $m+1$. Then we have $\sum_{i=1}^{4}{a_ig_i}\equiv (m+1)g_3$ (mod $n$) with $\sum_{i=1}^{4}{\vert a_i\vert} \leq m+1$ and $\vert a_1\vert +\vert a_2\vert +\vert a_4\vert\geq 1$ for some $a_i \in \Z$. Now $g_1\equiv g_2\equiv 1$ (mod $2m$) and $g_3 \equiv g_4 \equiv n \equiv 0$ (mod $2m$). Thus $a_1+a_2 \equiv 0$ (mod $2m$), and so $a_2=-a_1$. Also $g_1 \equiv 1, g_2 \equiv 2m+1, g_3 \equiv 0, g_4 \equiv 2m, n \equiv 0$ (all mod $4m$). So $a_1+(2m+1)a_2+2ma_4 \equiv 0$ (mod $4m$), $a_1-a_1(2m+1)+2ma_4 \equiv 0$ (mod $4m$), and $2m(a_4-a_1) \equiv 0$ (mod $4m$). Hence $a_4 \equiv a_1$ (mod 2). Let $a_i=x_im+y_i$ where $x_i, y_i \in \Z$ for $i=1,2,3,4$. Then $x_2=-x_1$ and $y_2=-y_1$. Also $x_4 \equiv x_1$ and $y_4 \equiv y_1$ (both mod 2). As $\sum_{i=1}^{4}{\vert a_i\vert} \leq m+1$, we have $2\vert x_1 \vert+\vert x_3 \vert+\vert x_4 \vert \leq 1$. Thus $x_1=0$. Then $x_4$ is even, and so $x_4=0$. Therefore we have $a_1=y_1, a_2=-y_1, a_3=x_3m+y_3$ and $a_4=y_4$. Hence $\sum_{i=1}^{4}{a_ig_i}-(m+1)g_3=4(x_3-1)m^5+4(y_3+y_4-1)m^4+4(-y_1+x_3-1)m^3+4(-y_1+y_3-x_3+y_4)m^2+2(-3y_1-2y_3-y_4+2)m \equiv 0$ (mod $n$). But $n \equiv 0$ (mod $4m$). So $2m(y_1+2y_4) \equiv 0$ (mod $4m$), and $y_4=-y_1$ (mod 2). As this polynomial is a quintic in $m$ we may set it equal to $(x_5m+y_5)n$ where $x_5, y_5 \in \Z$. Now we equate coefficients of powers of $m$. $m^5$: $x_3-1=2x_5$. $m^4$: $y_3+y_4-1=2x_5+2y_5$. $m^3$: $-y_1+x_3-1=3x_5+2y_5$. $m^2$: $-y_1+y_3-x_3+y_4=x_5+3y_5$. $m$: $-3y_1-2y_3-y_4+2=2y_5$. If $x_3=0$ then this would imply $2x_5=-1$ and so $x_3=\pm 1$. First consider $x_3=1$. Then by simple manipulation we find $x_5=0, y_5=0, y_1=0, y_3=1$ and $y_4=0$. This gives $\sum_{i=1}^{4}{a_ig_i}=(m+1)g_3$, which does not satisfy the premise that we must have $\vert a_1\vert +\vert a_2\vert +\vert a_4\vert\geq 1$. Now consider $x_3=-1$. Then we find $x_5=-1, y_5=0, y_1=1, y_3=0$ and $y_4=-1$. So $\sum_{i=1}^{4}{a_ig_i}=g_1-g_2-mg_3-g_4$, which does not satisfy the premise that $\sum_{i=1}^{4}{\vert a_i\vert} \leq m+1$. Therefore vertex $(m+1)g_3$ is a type $T_1$ vertex in level $m+1$ and consequently so is $(m+1)(-g_3)$.

Similarly, we suppose for a contradiction that the vertex $(m+1)g_4$ is not a type $T_1$ vertex in level $m+1$. Then we have $\sum_{i=1}^{4}{a_ig_i}\equiv (m+1)g_4$ (mod $n$) with $\sum_{i=1}^{4}{\vert a_i\vert} \leq m+1$ and $\vert a_1\vert +\vert a_2\vert +\vert a_3\vert\geq 1$ for some $a_i \in \Z$. Following the same argument as for vertex $(m+1)g_3$ we again find $a_1=y_1, a_2=-y_1, a_3=x_3m+y_3$ and $a_4=y_4$. Hence $\sum_{i=1}^{4}{a_ig_i}-(m+1)g_4=4(x_3-1)m^5+4(y_3+y_4-1)m^4+4(-y_1+x_3-1)m^3+2(-2y_1+2y_3-2x_3+2y_4-1)m^2+2(-3y_1-2y_3-y_4+1)m \equiv 0$ (mod $n$). As this polynomial is a quintic in $m$ we may set it equal to $(x_5m+y_5)n$ where $x_5, y_5 \in \Z$. Now we equate coefficients of powers of $m$. $m^5$: $x_3-1=2x_5$. $m^4$: $y_3+y_4-1=2x_5+2y_5$. $m^3$: $-y_1+x_3-1=3x_5+2y_5$. $m^2$: $-2y_1+2y_3-2x_3+2y_4-1=2x_5+6y_5$. $m$: $-3y_1-2y_3-y_4+1=2y_5$. If $x_3=0$ then this would imply $2x_5=-1$ and so $x_3=\pm 1$. First consider $x_3=1$. Then by simple manipulation we find $2y_5=1$ which is not possible. Now consider $x_3=-1$. This implies $2y_5=-1$ which is also not possible. Therefore vertex $(m+1)g_4$ is a type $T_1$ vertex in level $m+1$ and consequently so is $(m+1)(-g_4)$.

Now we need to prove that the vertex $(m+2)g_3$ is not a type $T_1$ vertex in level $m+2$. The vertex $-g_4-(m-1)g_3-g_2+g_1$ can be reached by a path of length $m+2$ with edges from four distinct generators. Therefore if it is in level $m+2$ then it is a type $T_4$ vertex. We have $mn-g_4-(m-1)g_3-g_2+g_1=8m^5+8m^4+12m^3+4m^2-(4m^4+4m^2-2m)-(m-1)(4m^4+4m^2-4m)-(4m^3+4m^2+6m+1)+1=4m^5+8m^4+4m^3+4m^2-8m=(m+2)(4m^4+4m^2-4m)=(m+2)g_3$. Thus $(m+2)g_3$ is not a type $T_1$ vertex in level $m+2$, and so neither is $(m+2)(-g_3)$. Finally we prove that the vertex $(m+2)g_4$ is not a type $T_1$ vertex in level $m+2$. The vertex $-(m-1)g_4+g_3+g_2-g_1$ (mod $n$) can be reached by a path of length $m+2$ with edges from four distinct generators. Therefore if it is in level $m+2$ then it is a type $T_4$ vertex. Here $(m-1)n-(m-1)g_4+g_3+g_2-g_1=(m-1)(8m^4+8m^3+12m^2+4m)-(m-1)(4m^4+4m^2-2m)+(4m^4+4m^2-4m)+(4m^3+4m^2+6m+1)-1=4m^5+8m^4+4m^3+6m^2-4m=(m+2)(4m^4+4m^2-2m)=(m+2)g_4$. Thus $(m+2)g_4$ is not a type $T_1$ vertex in level $m+2$, and so neither is $(m+2)(-g_4)$.

Hence there are four type $T_1$ vertices in the first submaximal level, $m+1$, none in the second, $m+2$,  and consequently none in any later levels. Therefore the total number of type $T_1$ vertices is $8m+4=4k+4$.
\end{proof}


\section {Conclusion}

We have seen how an analysis of their distance partitions reveals much interesting structure of extremal and largest known circulant graphs up to degree 9. These graphs were all found to have odd girth which is maximal for their diameter. The maximum number of vertices in each level of the distance partition was shown to be related to an established upper bound for the order of Abelian Cayley graphs, $M_{AC}(d,k)$. These graphs all have a maximal zone where the levels achieve this upper bound, and for degree $d\geq 5$ a submaximal zone where they are smaller. Defining the type of each vertex in a level according to the number of adjacent vertices in the preceding level, the number of vertices of each type in each maximal level was also shown to be related to the same upper bound. Finally the total number of type $T_1$ vertices in each of these graphs was determined to be a linear function of their diameter.

We have observed for all the extremal and largest known graphs of degree 4 to 9 that the total number of type $T_1$ vertices increases by 4 for every increase by 1 in the diameter. We have also established that the number of type $T_1$ vertices in each level $l\geq2$ within the maximal zone is twice the dimension $f$, giving $2f$. The resultant ratio of $2/f$ gives a value of 1 for degree 4, 2/3 for degree 6, and 1/2 for degree 8. This correlates with the proportion of levels that lie within the maximal zone for each even degree. We also note for the largest known graphs of degree 6 to 9, having a submaximal zone, that the number of type $T_2$ vertices in each level is initially 0 in level 1, increases by $4{f\choose 2}=2f(f-1)$ per level in the maximal zone, and then decreases at the same rate in the submaximal zone after a limited transition adjustment between the two zones. As the number of such vertices in a level can never be negative, this progression can only exist as long as the submaximal zone does not contain more levels than the maximal zone. This limit is reached at dimension 4, when the maximal zone covers half of the levels. Indeed for the best graph of degree 8 the number of type $T_2$ vertices in level $k$ remains constant at 26 for any diameter $k\geq 3$. Extrapolating these relationships to the case of degree 10, the proportion of levels within the maximal zone would be 2/5 and the number of type $T_2$ vertices in the final level would consequently be negative for any diameter above some threshold, which is of course impossible.

A prime objective of this analysis of properties of the extremal and best graphs of degree 2 to 9 is to discover relationships that may be parametrised by degree and enable extrapolation to degree 10 and beyond. An unfortunate consequence of the failure of the type $T_2$ vertex number calculation for degree 10 is that some key relationships that are valid for circulant graphs up to dimension 4 are found to be invalid for any larger dimension. This means that extremal graphs of degree 10 and above must differ in structure from the solutions for degree 9 and below. Nevertheless it is hoped that elements of the approach presented in this paper will be useful in the search for extremal circulant graphs of higher dimension. It is conjectured that such graphs will continue to have maximal zones where the proportion of levels in the zone depends on the degree but is independent of the diameter. It is also conjectured that the total number of type $T_1$ vertices will be a linear function of the diameter with coefficient 4, independent of the degree. However it is quite possible that the graphs will not have odd girth that is maximal, $2k+1$.

The question also arises whether such an analysis of distance partitions and vertex types might provide the basis for a proof that the largest known circulant graphs of degree 6 to 9 are extremal for all diameters above their known thresholds.



\addcontentsline{toc}{chapter}{Bibliography}

\end{document}